\newcommand{\IR}{\ensuremath{\mathbb{R}}}
\newcommand{\IN}{\ensuremath{\mathbb{N}}}
\newcommand{\C}{\mathcal{C}}
\renewcommand{\d}{{\rm d}}
\DeclareMathOperator{\cost}{cost}
\newcommand{\dist}{{\rm dist}}
\newcommand{\vol}{{\rm vol}}
\newcommand{\set}[1]{\left\{#1\right\}}
\newcommand{\abs}[1]{\left|#1\right|}
\newcommand{\brackets}[1]{\left(#1\right)} 
\newcommand{\norm}[1]{\left\Vert#1\right\Vert}
\newtheorem{thm}{Theorem}
\newtheorem{cor}{Corollary}
\theoremstyle{plain}
\newtheorem{lemma}{Lemma}
\theoremstyle{definition}
\newtheorem{rem}{Remark}
\newtheorem*{ack}{Acknowledgements}
\title{Uniform recovery of high-dimensional $C^r$-functions}
\author{
David Krieg
\\ 
Mathematisches Institut, Universit\"at Jena\\ 
Ernst-Abbe-Platz 2, 07743 Jena, Germany,  \\ 
david.krieg@uni-jena.de}
\date{\today}
\begin{document}

\maketitle

\begin{abstract} \noindent
 We consider functions on the $d$-dimensional
 unit cube whose partial derivatives up to order $r$
 are bounded by one.
 It is known that the minimal number of function values
 that is needed to approximate the integral of such functions 
 up to the error $\varepsilon$ 
 is of order
 $(d/ \varepsilon)^{d/r}$.
 Among other things,
 we show that the minimal number of function values that is needed to
 approximate such functions in the uniform norm
 is of order $(d^{r/2} /\varepsilon)^{d/r}$
 whenever $r$ is even.
\end{abstract}

\section{Introduction and results}

We study the problem of the uniform recovery of
functions by deterministic algorithms
that use a finite number of function values.
We are interested in the class
\begin{equation}
\label{standard classes}
 \C^r_d= \set{f \in \C^r\brackets{[0,1]^d} \mid 
 %\max_{\beta \in \IN_0^d:\ \abs{\beta}=k }
 \Vert D^\beta f \Vert_\infty \leq 1 
 \text{\ for all } \beta \in \IN_0^d \text{ with  } \abs{\beta}\leq r}
\end{equation}
of real-valued functions on the $d$-dimensional unit cube 
whose partial derivatives up to order $r\in\IN$
are continuous and bounded by one.
It is well known that the integration of functions from $\C^r_d$
suffers from the curse of dimensionality.
In fact, the minimal number $n^{\rm int}(\varepsilon,\C^r_d)$
of function values that is needed to guarantee an integration error
$\varepsilon \in (0,1/2)$ for any function from $\C^r_d$
grows super-exponentially with the dimension.
It is proven in \cite{hnuw} that there are positive constants $c_r$
and $C_r$ such that
\begin{equation*}
 \brackets{c_r\, d^{1/r} \varepsilon^{-1/r}}^d
 \leq n^{\rm int}(\varepsilon,\C^r_d)
 \leq \brackets{C_r\, d^{1/r} \varepsilon^{-1/r}}^d
 \end{equation*}
for all $\varepsilon \in (0,1/2)$ and $d\in\IN$.
Roughly speaking $n^{\rm int}(\varepsilon,\C^r_d)$
is of order $(d/ \varepsilon)^{d/r}$.
See Section~\ref{setting section} for a precise definition 
of the $n^{\rm int}(\varepsilon,\C^r_d)$
and further notation.

Since an $\varepsilon$-approximation of the function
immediately yields an $\varepsilon$-approximation of its 
integral, the uniform recovery
of functions from $\C^r_d$ can only be harder.
But how hard is the uniform recovery problem?
Is it significantly harder than the integration problem?
These questions were recently posed in \cite[Section~6]{abc}.

If $r=1$, the answer is known.
In this case, the minimal number 
$n^{\rm app}(\varepsilon,\C^r_d)$
of function values that is needed to guarantee an approximation error
$\varepsilon>0$ for any function from $\C^r_d$
in the uniform norm behaves similarly to $n^{\rm int}(\varepsilon,\C^r_d)$.
There are positive constants $c$
and $C$ such that
\begin{equation*}
 \brackets{c\, d\, \varepsilon^{-1}}^d
 \leq n^{\rm app}(\varepsilon,\C^1_d) \leq
 \brackets{C\, d\, \varepsilon^{-1}}^d
\end{equation*}
for all $\varepsilon \in (0,1/2)$ and $d\in\IN$.
This result is basically contained in \cite{suk}.
Nonetheless, we will present its proof.
If $r\geq 2$ is even, we obtain the following result.

\begin{thm}
\label{main theorem}
Let $r\in\IN$ be even. 
Then there are positive constants $c_r$, $C_r$ and $\varepsilon_r$
such that
\begin{equation*}
 \brackets{c_r \sqrt{d}\, \varepsilon^{-1/r}}^d
 \leq n^{\rm app}(\varepsilon,\C^r_d) \leq
 \brackets{C_r \sqrt{d}\, \varepsilon^{-1/r}}^d
\end{equation*}
for all $d\in\IN$ and $\varepsilon\in (0,\varepsilon_r)$.
The upper bound holds for all $\varepsilon>0$.
\end{thm}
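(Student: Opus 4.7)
For the \emph{lower bound}, I would use a classical fooling argument, with the crucial twist that the bumps live on $\ell^2$-balls rather than cubes --- this is what produces the $\sqrt{d}$ factor that is absent from the integration lower bound. For even $r$ the natural choice is the radial polynomial $\phi_0(x) = (1 - \|x\|_2^2)_+^{r+1}$, which is $C^r$ on $\IR^d$; a chain-rule computation (using that $\phi_0$ depends on $x$ only through the single scalar $\|x\|_2^2$) gives $\|D^\alpha \phi_0\|_\infty \le C_r$ for every $|\alpha| \le r$, with $C_r$ depending only on $r$ and not on $d$. Rescaling $\phi(x) = C_r^{-1}\phi_0(x/r_0)$ yields a function in $\C^r_d$ (for $r_0 \le 1$) supported on the $\ell^2$-ball of radius $r_0$ with peak value of order $r_0^r$. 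I would then take a maximal $2r_0$-separated set $x_1,\dots,x_N \subset [r_0,1-r_0]^d$ in the Euclidean metric; a Minkowski--Hlawka packing argument combined with the asymptotic $\vol B_2(r_0) \asymp (2\pi e r_0^2/d)^{d/2}$ gives $N \gtrsim (c\sqrt{d}/r_0)^d$. The family $f_\epsilon = \sum_{i=1}^N \epsilon_i \phi(\cdot - x_i)$ for $\epsilon \in \{\pm 1\}^N$ consists of $\C^r_d$-functions (disjoint supports), and any deterministic algorithm using fewer than $N$ samples misses at least one bump, so the adversary may flip its sign to produce an error of order $\|\phi\|_\infty \asymp r_0^r$. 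Tuning $r_0 \asymp \varepsilon^{1/r}$ completes the lower bound.

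For the \emph{upper bound}, I would tile $[0,1]^d$ with axis-aligned cubes of side $h \asymp \varepsilon^{1/r}/\sqrt{d}$ and use on each cube a local interpolation scheme based on $\binom{d+r-1}{r-1}$ samples and polynomials of total degree $<r$. The cube count is $h^{-d} \asymp (\sqrt{d}/\varepsilon^{1/r})^d$, which is the correct rate after the polynomial-in-$d$ per-cube factor is absorbed into $C_r^d$. The base case $r=2$ is especially clean: tensor-product piecewise linear interpolation has $1$-D operators of $L^\infty$-norm exactly $1$, so the telescoping $I - \bigotimes_i L_i = \sum_i L_1 \cdots L_{i-1}(I - L_i)$ gives total error $\le \sum_i \|\partial_i^2 f\|_\infty h^2/8 \le dh^2/8 \le \varepsilon$ and sample count $(2/h)^d \asymp (\sqrt{d}/\sqrt{\varepsilon})^d$, matching the theorem exactly.

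The step I expect to be hardest is the per-cube error estimate for $r\ge 4$. The naive Taylor bound $|R_r(x)| \le \|x-c\|_1^r/r!$ yields only $(dh)^r/r!$ on a cube of side $h$ --- larger than $\varepsilon$ by $d^{r/2}$ for the target $h \asymp \varepsilon^{1/r}/\sqrt{d}$ --- and examples such as $f(x) = \cos(\sum x_i)\in\C^r_d$ show that this estimate is tight on cube cells. So the polynomial-on-cube approach alone does not suffice; the even-$r$ upper bound presumably requires either a structural argument for the symmetric $r$-tensor $D^r f$ that exploits even parity (to convert entrywise derivative bounds into $\ell^2$-spectral ones), or a geometrically different partition (e.g.\ $\ell^2$-ball cells with radial/$L^2$-projection interpolants whose Lebesgue constants stay $d$-independent). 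Either way, the technical heart is achieving a per-cell error of order $(\sqrt{d}\,h)^r$ rather than $(dh)^r$, by exploiting the full information that for $f\in\C^r_d$ \emph{all} derivatives of order up to $r$ are bounded by $1$, not merely the top-order ones.
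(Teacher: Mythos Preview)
Your lower bound is correct and essentially the paper's: the paper also uses radial bumps $g_d(x)=h(\|x\|_2^2)$ whose $r$-th order derivative norms are bounded independently of $d$ (for the same chain-rule reason you indicate), together with a volume argument in place of your packing argument. Your $r=2$ upper bound via tensor piecewise-linear interpolation is also fine.

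The gap is precisely where you locate it: the upper bound for even $r\ge4$. Your diagnosis is right, but neither speculative fix works. For the spectral idea, your own test function $f(x)=\cos(\sum_i x_i)$ has $|D^\alpha f|\le1$ for all $\alpha$ yet $|\partial_\theta^{\,r} f|=d^{r/2}$ for $\theta=(1,\dots,1)/\sqrt d$, so the entrywise bound yields no $\ell^2$-spectral improvement; and for the same reason the Taylor remainder on an $\ell^2$-ball of radius $\rho$ is still of order $\rho^r d^{r/2}$ for $\C^r_d$-functions, so ball cells do not help either (they help only for $\widetilde\C^r_d$). The paper's route is different in kind: it does not build an approximant. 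By Lemma~\ref{Bakhvalov} it suffices to exhibit a point set $P$ such that every $f\in\C^r_d$ vanishing on $P$ has small $\|f\|_\infty$. The paper takes $P$ to be the regular grid $Q_m^d$ together with a cloud of $(d+1)^{r-1}$ nearby points around each grid point, chosen so that $f|_P=0$ forces all partial derivatives of order $<r$ to be (arbitrarily close to) zero on $Q_m^d$; write $E(Q_m^d,\C^r_d)$ for the supremum of $\|f\|_\infty$ over such $f$. The decisive step is the recursion
\[
E\bigl(Q_m^d,\C^r_d\bigr)\ \le\ E\bigl(Q_m^{d-1},\C^r_{d-1}\bigr)\ +\ \frac{1}{8m^2}\,E\bigl(Q_m^d,\C^{r-2}_d\bigr),
\]
proved by an extremum argument: if $z$ maximises $|f|$ and $z_d\in(0,1)$, then $\partial_d f(z)=0$, so one-variable Taylor gives $|f(z)|\le|f(y)|+\tfrac{1}{8m^2}\|\partial_d^2 f\|_\infty$, where $y$ is $z$ with its last coordinate snapped to the grid. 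The first term is controlled by the $(d-1)$-dimensional problem on the hyperplane $x_d=y_d$; the second by the $(r-2)$-smoothness problem, since $\partial_d^2 f\in\C^{r-2}_d$ and inherits the vanishing-derivative property on $Q_m^d$. A double induction then gives $E(Q_m^d,\C^r_d)\le e\,d^{r/2}/(2m)^r$, so $m\asymp\sqrt d\,\varepsilon^{-1/r}$ suffices. Even parity enters only because the recursion steps $r\mapsto r-2$ and terminates at $E(Q_m^d,\C^0_d)=1$.
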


Roughly speaking $n^{\rm app}(\varepsilon,\C^r_d)$
is of order $(d^{r/2} /\varepsilon)^{d/r}$.
If the error tolerance $\varepsilon$ is fixed,
the complexity grows like $d^{d/2}$.
This is in contrast to the case $r=1$,
where we have a growth of order $d^d$.
If $r\geq 3$ is odd, 
we only have a partial result.

% \begin{thm}
% \label{main theorem r=1}
% There are positive constants $c$ and $C$
% such that
% \begin{equation*}
%  \brackets{c\, d\, \varepsilon^{-1}}^d
%  \leq n^{\rm app}(\varepsilon,\C^1_d) \leq
%  \brackets{C\, d\, \varepsilon^{-1}}^d
% \end{equation*}
% for all $d\in\IN$ and $\varepsilon\in (0,1/2)$.
% The upper bound holds for all $\varepsilon>0$.
% \end{thm}

\begin{thm}
\label{main theorem odd}
Let $r\geq 3$ be odd. 
Then there are positive constants $c_r$, $C_r$ and $\varepsilon_r$
such that
\begin{equation*}
 \brackets{c_r \sqrt{d}\, \varepsilon^{-1/r}}^d
 \leq n^{\rm app}(\varepsilon,\C^r_d) \leq
 \brackets{C_r\, d^{\frac{r+1}{2r}} \varepsilon^{-1/r}}^d
\end{equation*}
for all $d\in\IN$ and $\varepsilon\in (0,\varepsilon_r)$.
The upper bound holds for all $\varepsilon>0$.
\end{thm}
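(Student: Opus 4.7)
\emph{Lower bound.} The lower bound has exactly the same form as in Theorem~\ref{main theorem}, so my first step is to observe that the argument used in the even case does not rely on $r$ being even. A typical such argument takes a smooth radial bump $\psi$ supported in the Euclidean unit ball with $\|D^\beta\psi\|_\infty\leq 1$ for all $|\beta|\leq r$, rescales to bumps $\phi_i$ of height $\sim\varepsilon$ on disjoint Euclidean balls of radius $R\sim\varepsilon^{1/r}$, and packs them into $[0,1]^d$. Because $\vol(B_R^d)$ is a factor $\sim(2\pi e/d)^{d/2}$ smaller than $\vol([0,1]^d)$, one fits $N\sim\brackets{c_r\sqrt{d}\,\varepsilon^{-1/r}}^d$ bumps, and the standard adversarial-sign argument then gives $n^{\rm app}(\varepsilon,\C^r_d)\geq N/2$. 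Since the definition of $\C^r_d$ is parity-insensitive, the same bump works for odd $r$.

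\emph{Upper bound.} I would construct an approximation by piecewise polynomials. Partition $[0,1]^d$ into $m^d$ axis-aligned subcubes of side $h=1/m$, and on each subcube $Q$ approximate $f$ by a polynomial $p_Q$ of total degree $\leq r$ recovered from a unisolvent set of $\binom{d+r}{r}=O_r(d^r)$ function values. This polynomial-in-$d$ factor will be absorbed into the final constant. The decisive input is a Jackson-type estimate
\[
 \|f-p_Q\|_{L^\infty(Q)}\leq K_r\,d^{(r+1)/2}\,h^r \qquad\text{for all }f\in\C^r_d.
\]
Solving $K_r\,d^{(r+1)/2}\,h^r=\varepsilon$ yields $m\leq C_r\,d^{(r+1)/(2r)}\varepsilon^{-1/r}$ and hence $n\leq m^d\binom{d+r}{r}\leq\brackets{C_r'\,d^{(r+1)/(2r)}\varepsilon^{-1/r}}^d$, as required.

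\emph{Main obstacle.} The heart of the proof is the Jackson estimate with the claimed exponent of $d$. The naive Taylor remainder on a cube of side $h$ only gives $|x-x_0|_1^r/r!\leq(dh)^r/r!$, a factor $d^{(r-1)/2}$ too large. To save this factor I would work instead on the Euclidean ball $B$ circumscribing $Q$, where isotropy reduces the task to bounding the directional derivative $(\xi\cdot\nabla)^r f$ for unit vectors $\xi$. Since $|\xi|_1\leq\sqrt{d}$, one has $|(\xi\cdot\nabla)^r f|\leq d^{r/2}$, and together with the $B$-to-$Q$ transition this produces the factor $d^{(r+1)/2}$. For even $r$ the argument of Theorem~\ref{main theorem} presumably pairs up the $r$ factors of $\xi\cdot\nabla$ (e.g.~via the Laplacian) to save an extra $\sqrt{d}$ and match the sharp exponent $d^{r/2}$; this pairing is unavailable for odd $r$, which is precisely why the odd case loses the factor $d^{1/(2r)}$ and gives the weaker upper bound stated in the theorem.
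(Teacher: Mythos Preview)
Your lower bound is fine and matches the paper's: both use a radial bump plus the volume ratio $\vol([0,1]^d)/\vol(B_R^d)\asymp (c\sqrt d/R)^d$, and indeed nothing there depends on the parity of $r$.

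The upper bound, however, has a genuine gap. The Jackson estimate
\[
 \|f-p_Q\|_{L^\infty(Q)}\le K_r\,d^{(r+1)/2}\,h^r
 \qquad (f\in\C^r_d,\ Q\text{ a cube of side }h)
\]
is false. Take $f(x)=\phi(x_1+\cdots+x_d)$ with $\|\phi^{(k)}\|_\infty\le 1$ for $k\le r$; then $f\in\C^r_d$. Restricting any polynomial $p$ of total degree $\le r-1$ to the main diagonal gives a univariate polynomial of degree $\le r-1$, while $t\mapsto\phi(t\sqrt d)$ has $r$th derivative of size $d^{r/2}$ on an interval of length $\sqrt d\,h$. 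Hence the best approximation error is $\asymp d^{r/2}(\sqrt d\,h)^r=d^r h^r$, not $d^{(r+1)/2}h^r$. Your heuristic does not escape this: on the circumscribed ball of radius $\sqrt d\,h/2$ the directional Taylor remainder is $\frac{1}{r!}(\sqrt d\,h/2)^r\cdot|(\xi\cdot\nabla)^r f|$, and with $|(\xi\cdot\nabla)^r f|\le d^{r/2}$ you again get $d^r h^r$. Subcube polynomial interpolation therefore only yields $n\le(C_r\,d\,\varepsilon^{-1/r})^d$, which is the trivial bound.

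The paper gets the exponent $d^{(r+1)/2}$ by a completely different mechanism. It does not approximate by a polynomial on each subcube. Instead it enriches the regular grid $Q_m^d$ by small clouds so that any $f\in\C^r_d$ vanishing on the enriched set has all derivatives of order $<r$ essentially zero on $Q_m^d$; this costs only a factor $(d+1)^{r-1}$ in the number of points. The crucial step is then a \emph{two-step} recursion in $r$: at the global maximum $z$ of $|f|$ one has $\nabla f(z)=0$, so a one-variable second-order Taylor expansion toward the nearest grid hyperplane gives
\[
 E(Q_m^d,\C^r_d)\le E(Q_m^{d-1},\C^r_{d-1})+\tfrac{1}{8m^2}\,E(Q_m^d,\C^{r-2}_d).
\]
Iterating yields $E(Q_m^d,\C^r_d)\le e\,d^{r/2}/(2m)^r$ for even $r$. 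For odd $r$ one first applies a single one-step bound $E(Q_m^d,\C^r_d)\le \tfrac{d}{2m}\,E(Q_m^d,\C^{r-1}_d)$ (integrate along an axis-parallel path of $\ell_1$-length $\le d/(2m)$ to the nearest grid point) and then invokes the even result for $r-1$, producing the factor $d\cdot d^{(r-1)/2}=d^{(r+1)/2}$. The gain over naive Taylor comes precisely from exploiting $\nabla f=0$ at the maximum, which your local polynomial scheme cannot see.
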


%Let us shortly discuss the results for odd $r\geq 3$.
We point to the fact that 
$n^{\rm app}(\varepsilon,\C^r_d) \leq n^{\rm app}(\varepsilon,\C^{r-1}_d)$
since the upper bound resulting %for $n^{\rm app}(\varepsilon,\C^{r-1}_d)$
from Theorem~\ref{main theorem}
may improve on the upper bound %for $n^{\rm app}(\varepsilon,\C^r_d)$
of Theorem~\ref{main theorem odd}
for $d\succ \varepsilon^{-2/(r-1)}$
if $r\geq 3$ is odd.
In~this case, we do not know the exact behavior of
$n^{\rm app}(\varepsilon,\C^r_d)$ as a function
of both $d$ and $\varepsilon$.
If regarded as a function of $\varepsilon$,
the complexity is of order $\varepsilon^{-d/r}$.
If regarded as a function of $d$, 
it is of order $d^{d/2}$.

Altogether, our results justify the following comparison.

\begin{cor}
 The uniform recovery problem on the class $\C^r_d$ is
 significantly harder than the integration problem 
 if and only if $r\geq 3$.
\end{cor}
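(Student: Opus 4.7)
The plan is to interpret ``significantly harder'' as the natural reading suggested by the bounds already established: the base of the $d$th power in the complexity is of a different asymptotic order in $d$ (equivalently, for fixed $\varepsilon$, the ratio $n^{\rm app}/n^{\rm int}$ grows super-exponentially in $d$). Under this reading, the corollary is a direct consequence of collating the bounds stated in the excerpt with the known integration bounds of \cite{hnuw}, so the proof should amount to three straightforward comparisons rather than any genuinely new argument.

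For the ``only if'' direction I would treat $r=1$ and $r=2$ separately. When $r=1$, the quoted bounds give $n^{\rm app}(\varepsilon,\C^1_d)\leq (Cd/\varepsilon)^d$ and $n^{\rm int}(\varepsilon,\C^1_d)\geq (c_1 d/\varepsilon)^d$, so the ratio is bounded by $(C/c_1)^d$; i.e., the base $d/\varepsilon$ coincides up to a constant. When $r=2$, Theorem~\ref{main theorem} (applied with $r=2$) yields $n^{\rm app}(\varepsilon,\C^2_d)\leq (C_2\sqrt d\,\varepsilon^{-1/2})^d$, and the integration lower bound from \cite{hnuw} with $r=2$ gives $n^{\rm int}(\varepsilon,\C^2_d)\geq (c_2\sqrt d\,\varepsilon^{-1/2})^d$. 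Again the bases agree up to a constant and the ratio is bounded by $(C_2/c_2)^d$. In both cases the complexities of approximation and integration are of the same asymptotic order in both $d$ and $\varepsilon$, so approximation is not significantly harder.

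For the ``if'' direction, let $r\geq 3$. By Theorem~\ref{main theorem} (if $r$ is even) or Theorem~\ref{main theorem odd} (if $r$ is odd), we have the same lower bound
\[
n^{\rm app}(\varepsilon,\C^r_d)\ \geq\ \brackets{c_r\sqrt d\,\varepsilon^{-1/r}}^d,
\]
while the integration upper bound of \cite{hnuw} gives
\[
n^{\rm int}(\varepsilon,\C^r_d)\ \leq\ \brackets{C_r\, d^{1/r}\,\varepsilon^{-1/r}}^d.
\]
Dividing, the ratio is at least $(c_r/C_r)^d\cdot d^{d(r-2)/(2r)}$. Since $r\geq 3$, the exponent $(r-2)/(2r)$ is strictly positive, so this ratio grows super-exponentially in $d$. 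In particular, the base in the approximation complexity is of order $\sqrt d$, whereas the base in the integration complexity is of order $d^{1/r}$, and $\sqrt d/d^{1/r}=d^{(r-2)/(2r)}\to\infty$.

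The only real subtlety is a definitional one, namely making precise what ``significantly harder'' means in the corollary; I would address this at the outset of the proof by fixing the interpretation ``the asymptotic order of the base of the $d$th power differs as $d\to\infty$'' so that both directions become clean consequences of the displayed bounds. No new estimates are required.
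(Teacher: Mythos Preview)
Your proposal is correct and matches the paper's intended reading: the corollary is stated without proof as an immediate consequence of the integration bounds from \cite{hnuw} together with Theorems~\ref{main theorem} and~\ref{main theorem odd} (and the $r=1$ result), and your three case-by-case comparisons of the bases $d^{1/r}$ versus $d^{1/2}$ are exactly the computation the paper leaves to the reader. Your remark that ``significantly harder'' must first be given a precise meaning is apt, since the paper never defines it formally; your interpretation---that the base of the $d$th power differs in order as $d\to\infty$, equivalently that the ratio grows super-exponentially rather than merely exponentially---is the natural one in this context and is consistent with the paper's informal phrasing.
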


Except for the case $r=1$, the lower bounds of the
previous theorems even hold for the smaller class
\begin{equation}
\label{tilde classes}
 \widetilde\C^r_d= \set{f \in \C^r\brackets{[0,1]^d} \mid 
 %\max_{\beta \in \IN_0^d:\ \abs{\beta}=k }
 \Vert \partial_{\theta_1}\cdots\partial_{\theta_\ell} f \Vert_\infty \leq 1 
 \text{\ for all } \ell \leq r \text{ and  } \theta_i\in S_{d-1}}
\end{equation}
of functions whose directional derivatives up to order $r\in\IN$
are bounded by one.
For this class, we obtain sharp bounds on the $\varepsilon$-complexity
of the uniform recovery problem for any $r\in\IN$.
The minimal number $n^{\rm app}(\varepsilon,\widetilde\C^r_d)$ of
function values that is needed to guarantee
an approximation error $\varepsilon$ for every function
from $\widetilde\C^r_d$ in the uniform norm satisfies the following.

\begin{thm}
\label{side theorem}
Let $r\in\IN$. 
There are positive constants $c_r$, $C_r$ and $\varepsilon_r$
such that
\begin{equation*}
 \brackets{c_r \sqrt{d}\, \varepsilon^{-1/r}}^d
 \leq n^{\rm app}(\varepsilon,\widetilde\C^r_d) \leq
 \brackets{C_r \sqrt{d}\, \varepsilon^{-1/r}}^d
\end{equation*}
for all $d\in\IN$ and $\varepsilon\in (0,\varepsilon_r)$.
The upper bound holds for all $\varepsilon>0$.
\end{thm}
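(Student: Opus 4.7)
The lower bound has essentially been supplied by the proofs of Theorems~\ref{main theorem} and~\ref{main theorem odd}, since (as the paper remarks) their bump-function constructions already lie in the smaller class $\widetilde{\C}^r_d$ for $r\ge 2$; for $r=1$, a minor variant is needed. The upper bound must be proved afresh for odd $r\ge 3$ to improve the exponent $(r+1)/(2r)$ of Theorem~\ref{main theorem odd} to $1/2$. The conceptual driver of the whole argument is that $f\in\widetilde{\C}^r_d$ has its $r$-th symmetric derivative form bounded by $1$ on the Euclidean unit sphere, whence Taylor's formula gives
\[
\abs{f(x)-T^{r-1}_{x_0}f(x)}\le\frac{\norm{x-x_0}_2^r}{r!},
\]
with the \emph{Euclidean} (not $\ell_1$) distance on the right; this is what produces the $\sqrt{d}$ factor in both bounds.

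\textbf{Upper bound.} I subdivide $[0,1]^d$ into axis-aligned sub-cubes of side $s=2D/\sqrt{d}$, where $D=(r!\,\varepsilon)^{1/r}$, so each sub-cube is contained in a Euclidean ball of radius $D$, and there are at most $(\sqrt{d}/(2D))^d$ of them. On each sub-cube I apply an affine image of a fixed template quasi-interpolation operator of polynomial degree $r-1$, using $\binom{d+r-1}{r-1}=O(d^{r-1})$ nodes per cell; choosing the template so that it reproduces polynomials of degree $r-1$ and has operator norm $\kappa_{r,d}$ controlled in $d$, the Taylor bound above forces the local sup-norm error to be at most $(1+\kappa_{r,d})D^r/r!\le\varepsilon$. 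The total sample count $(\sqrt{d}/(2D))^d\cdot O(d^{r-1})$ collapses to $(C_r\sqrt{d}\,\varepsilon^{-1/r})^d$ after absorbing polynomial-in-$d$ factors into the exponential base.

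\textbf{Lower bound.} Fix a smooth radial cut-off $\psi:[0,\infty)\to[0,1]$ with $\psi(0)=1$ and $\psi\equiv 0$ on $[1,\infty)$, set $\delta=c_r\varepsilon^{1/r}$, and place bumps $b_j(y)=2\varepsilon\,\psi(\norm{y-x_j}_2/\delta)$ at the centers $\{x_j\}_{j=1}^N$ of a maximal Euclidean $2\delta$-packing of $[0,1]^d$. A chain-rule computation shows that every $\ell$-th directional derivative of $b_j$ is bounded in absolute value by $C_r^{(\ell)}\varepsilon/\delta^\ell\le 1$ for $\varepsilon<\varepsilon_r$, so $b_j\in\widetilde{\C}^r_d$. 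The sharp volume bound $\vol(B_\delta)^{1/d}\asymp\delta\sqrt{2\pi e/d}$ (together with a greedy/Minkowski-type packing argument) yields $N\ge(c_r\sqrt{d}\,\varepsilon^{-1/r})^d$. Since the $b_j$ have pairwise disjoint supports, any deterministic algorithm using fewer than $N$ function values fails to probe the support of some $b_\ast$, cannot distinguish it from $-b_\ast$, and must incur uniform error $\ge\varepsilon$ on one of them.

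\textbf{Main obstacle.} The most delicate step is exhibiting a quasi-interpolation template of polynomial degree $r-1$ on the Euclidean ball in $\IR^d$ whose operator norm $\kappa_{r,d}$ is controlled as a function of $d$; one has considerable slack here, since any bound $\kappa_{r,d}\le\kappa_r^d$ is acceptable and merely enlarges $C_r$. A secondary concern is the matching high-dimensional packing/covering estimate, whose common base $\sqrt{d}$ in both bounds rests on the sharp volume formula for Euclidean balls and genuinely fails for any axis-aligned tensor-product construction—this is precisely why the $\widetilde{\C}^r_d$-result is sharper than what the partial-derivative class $\C^r_d$ permits for odd $r\ge 3$.
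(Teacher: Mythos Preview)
Your lower bound is fine and is essentially the packing variant of the paper's single-bump empty-ball argument (Lemmas~\ref{radial function} and~\ref{radius lower bound}); the two are interchangeable here. The remark that the case $r=1$ needs special treatment is a misreading: for the class $\widetilde\C^r_d$ the radial bump construction already works for every $r\in\IN$, including $r=1$; the exceptional $r=1$ statement in the paper concerns the larger class $\C^1_d$, whose sharper lower bound comes from integration.

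Your upper bound, however, has a real gap, and it is exactly the step you flag as delicate. The assertion that ``any bound $\kappa_{r,d}\le\kappa_r^d$ is acceptable and merely enlarges $C_r$'' is false. If the operator norm is $\kappa_r^d$, then to force the local error $(1+\kappa_{r,d})D^r/r!\le\varepsilon$ you must take $D\lesssim(\varepsilon)^{1/r}\kappa_r^{-d/r}$, so the number of subcubes is
\[
\Bigl(\tfrac{\sqrt{d}}{2D}\Bigr)^d \;\gtrsim\; \bigl(C_r'\sqrt{d}\,\varepsilon^{-1/r}\bigr)^d\cdot \kappa_r^{\,d^2/r},
\]
and the extra factor $\kappa_r^{d^2/r}$ cannot be absorbed into a base $C_r$ independent of $d$. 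What you actually need is $\kappa_{r,d}\le K_r$ uniformly in $d$, and you have not produced such a template; generic polynomial interpolation on $\binom{d+r-1}{r-1}$ nodes does not have Lebesgue constant bounded in $d$.

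The paper sidesteps this difficulty entirely. Instead of building an explicit reconstruction operator, it invokes Lemma~\ref{Bakhvalov} and works on the dual side: it shows that any $f\in\widetilde\C^r_d$ vanishing on a suitable enrichment of the grid $Q_m^d$ has small sup-norm. The enrichment (Lemma~\ref{small derivatives 2}) forces all directional derivatives of order $<r$ to be negligible at grid points, at the cost of a factor $(d+1)^{r-1}$ in the point count. Then a one-line recursion (Lemma~\ref{recursion lemma 2}) integrates the directional derivative along the straight segment from the maximizer $z$ to the nearest grid point, of Euclidean length at most $\sqrt d/(2m)$, yielding $E(Q_m^d,\widetilde\C^r_d)\le\frac{\sqrt d}{2m}\,E(Q_m^d,\widetilde\C^{r-1}_d)$ and hence $(\sqrt d/(2m))^r$ after $r$ steps. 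No Lebesgue constant ever appears. If you want to salvage your approach, one viable template is precisely ``Taylor polynomial at the cell center with derivatives replaced by finite differences''; proving its norm is $O_r(1)$ uniformly in $d$ amounts to redoing the paper's cloud argument inside each cell.
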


Before we turn to the proofs,
we shortly discuss some related problems.

\begin{rem}[Global optimization]
 We obtain analogous estimates
 for the complexity of global optimization on $F=\C^r_d$
 or $F=\widetilde\C^r_d$.
 This is because
 the minimal number $n^{\rm opt}(\varepsilon,F)$ 
 of function values that is needed to
 guarantee an $\varepsilon$-approximation of the maximum
 of a function from $F$ satisfies \cite{n_opt,w}
 \begin{equation*}
  n^{\rm app}(2\varepsilon,F)
  \leq
  n^{\rm opt}(\varepsilon,F)
  \leq
  n^{\rm app}(\varepsilon,F).
  \end{equation*}
\end{rem}

\begin{rem}[Infinite smoothness]
 It is proven in \cite{nw_inf}
 that even the uniform recovery of functions from
 \begin{equation*}
  \C^\infty_d = 
  \set{f \in \C^\infty\brackets{[0,1]^d} \mid 
 \Vert D^\beta f \Vert_\infty \leq 1 
 \text{\ for all } \beta \in \IN_0^d}
 \end{equation*}
 suffers from the curse of dimensionality.
 For $r\in\IN$, we have seen that the complexity $n^{\rm app}(\varepsilon,\C^r_d)$
 in fact depends super-exponentially on the dimension.
 It would be interesting to verify whether
 this is also true for $r=\infty$.
 We remark that the uniform recovery problem does not suffer from the curse
 if the target function lies within the modified class
 \begin{equation*}
  \overline{\C^\infty_d} = 
  \set{f \in \C^\infty\brackets{[0,1]^d} \mid 
 \sum_{\abs{\beta}=k} \frac{\Vert D^\beta f \Vert_\infty}{\beta !} \leq 1 
 \text{\ for all } k \in \IN_0}
 \end{equation*}
 of smooth functions. This is proven in \cite{vib}.
\end{rem}

\begin{rem}[Algorithms]
 This paper is not concerned with explicit algorithms.
 Nonetheless, our proof shows that there are optimal algorithms in the sense
 of Theorem~\ref{main theorem}, \ref{main theorem odd} and 
 \ref{side theorem} whose information is given by function values
 at a regular grid and small clouds around the grid points.
 This information can be used for a subcubewise Taylor approximation
 of the target function around the grid points,
 where the partial derivatives of order less than $r$ 
 are replaced by divided differences.
 The resulting algorithm is indeed optimal for the class $\widetilde \C^r_d$.
% which is easy to verify.
 However, the author does not know whether it is also optimal for $\C^r_d$.
\end{rem}

\begin{rem}[Other domains]
 Our lower bounds are still valid, 
 if the domains $[0,1]^d$ are replaced
 by any other sequence of domains $D_d\subset\IR^d$ 
 that satisfies $\vol_d(D_d)\geq a^d$ for some $a>0$
 and all $d\in\IN$.
 The upper bounds, however, heavily exploit the geometry of the unit cube.
 We remark that the curse of dimensionality for 
 the integration problem on general domains
 is studied in the recent paper~\cite{hpu}.
\end{rem}

\begin{rem}[Integration for $\widetilde\C^r_d$]
 Note that the right behavior 
 of the complexity $n^{\rm int}(\varepsilon,\widetilde\C^r_d)$ 
 of the integration problem for $\widetilde\C^r_d$
 as a function of $d$ and $\varepsilon$ is still open.
%  we know that the uniform recovery problem for $\widetilde\C^r_d$
%  is significantly harder than the integration problem for $r\geq 3$
%  and that this is not the case for $r=1$.
%  We do not know what happens in case $r=2$.
\end{rem}

 \section{The setting}
\label{setting section}

Let $d\in\IN$ and let $F$ 
be a class of continuous real-valued functions on
$[0,1]^d$.
We study the problem of uniform approximation
on $F$ via function values in the worst case setting.
An algorithm for numerical approximation
is a mapping $A=\varphi\circ N$
built from an information map $N:F\to \IR^n$
for some $n\in\IN$ and an arbitrary map 
$\varphi:\IR^n\to L_\infty\brackets{[0,1]^d}$.
The information map is of the form
\begin{equation*}
 N(f)=\brackets{f(x_1),\hdots,f(x_n)},
\end{equation*}
where the points $x_i\in[0,1]^d$ may be chosen
based on the already computed function values
$f(x_1),\hdots,f(x_{i-1})$ for $i=1,\hdots,n$.
The cost of the algorithm is the number $n$
of computed function values and denoted by $\cost(A)$.
Its worst case error is the quantity
\begin{equation*}
 e^{\rm app}(A,F) = \sup\limits_{f\in F} \norm{f-A(f)}_\infty.
\end{equation*}
See Novak and Woźniakowski~\cite[Chapter~4]{trac1} for a 
detailed discussion of algorithms
and their errors and cost in various settings.

The $n$th minimal worst case error
is the smallest worst case error 
of algorithms using at most $n$ function values,
that is
\begin{equation*}
 e^{\rm app}(n,F)=\inf\set{e^{\rm app}(A,F) \mid 
 \cost(A)\leq n}.
\end{equation*}
Finally, we formally define 
the minimal number 
of function values needed to approximate
an unknown function from $F$ 
up to the error $\varepsilon>0$ in the
uniform norm as
\begin{equation*}
 n^{\rm app}(\varepsilon,F)
 = \min\set{n\in\IN_0 \mid e^{\rm app}(n,F) \leq \varepsilon }.
\end{equation*}

Our results are concerned with the classes $F=\C^r_d$
and $F=\widetilde\C^r_d$ for $r\in\IN_0$ and $d\in\IN$
as defined in \eqref{standard classes}
and \eqref{tilde classes}.
Here, $D^{\beta}$ denotes the partial derivative
of order $\beta\in\IN_0^d$ and $\abs{\beta}=\sum_{i=1}^d \beta_i$.
Moreover, $\partial_\theta$ denotes the directional derivative
in the direction $\theta\in S_{d-1}$,
where $S_{d-1}$ is the euclidean unit sphere in $\IR^d$.
These classes are convex and symmetric.
Our proofs are based on the following fact.

\begin{lemma}[see~\cite{cw}]
\label{Bakhvalov}
Let $d\in\IN$ and $F\subset \C\brackets{[0,1]^d}$
be convex and symmetric. Then
 \begin{equation*}
  e^{\rm app}\brackets{n,F}
  = \inf\limits_{\substack{P\subset [0,1]^d\\\abs{P}\leq n}}\sup\limits_{\substack{f\in F\\f\mid_P=0}} \norm{f}_\infty
 .\end{equation*}
\end{lemma}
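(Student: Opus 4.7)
The plan is to establish the two inequalities separately. For the upper bound $e^{\rm app}(n,F) \leq \inf_P \sup_f \norm{f}_\infty$, I would fix a finite set $P \subset [0,1]^d$ with $|P|\leq n$, write $r(P) := \sup\set{\norm{h}_\infty : h\in F,\ h|_P = 0}$, and construct a non-adaptive algorithm $A$ that samples at $P$ and has error at most $r(P)$. For each $f\in F$, let $F_{f|_P} := \set{g\in F : g|_P = f|_P}$ and define the output pointwise as the Chebyshev midpoint
$$A(f)(x) = \tfrac{1}{2}\brackets{\sup_{g\in F_{f|_P}} g(x) + \inf_{g\in F_{f|_P}} g(x)}.$$
The key observation is that by convexity and symmetry of $F$, for any $f_1,f_2 \in F_{f|_P}$ the function $(f_1-f_2)/2$ lies in $F$ and vanishes on $P$, so $\norm{f_1-f_2}_\infty \leq 2r(P)$. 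Pointwise this forces $|f(x)-A(f)(x)| \leq r(P)$, hence $\norm{f-A(f)}_\infty \leq r(P)$.

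For the reverse inequality I would take an arbitrary adaptive algorithm $A = \varphi\circ N$ of cost at most $n$ and let $P_0$ be the sample set that $A$ produces on the input $f\equiv 0$. The crucial remark is that if $f\in F$ satisfies $f|_{P_0}=0$, then the adaptive sampling rule, being driven only by the values $0,0,\ldots,0$, queries exactly the same points as on the zero input; therefore $N(f)=N(0)$ and $A(f)=A(0)$. By symmetry, $-f\in F$ also vanishes on $P_0$, so $A(-f)=A(0)$ as well. The triangle inequality then yields
$$2\norm{f}_\infty \leq \norm{f - A(f)}_\infty + \norm{A(-f) - (-f)}_\infty \leq 2\, e^{\rm app}(A,F),$$
so $\norm{f}_\infty \leq e^{\rm app}(A,F)$ for every such $f$, giving $r(P_0)\leq e^{\rm app}(A,F)$ and the lower bound upon taking the infimum over $A$.

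The main obstacle is conceptual rather than technical: one must recognize that on the slice $\set{f\in F : f|_{P_0}=0}$ an adaptive algorithm is effectively blind, and then exploit the symmetry of $F$ to absorb the factor of two that would otherwise appear via the triangle inequality. A minor technicality is the measurability of the pointwise supremum and infimum defining $A(f)$; this is harmless because in all classes of interest $F$ is compact in $\C\brackets{[0,1]^d}$ by Arzel\`a--Ascoli, hence separable, so the suprema reduce to countable ones.
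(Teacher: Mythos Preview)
The paper does not give its own proof of this lemma; it attributes the result to Bakhvalov and Smolyak and refers the reader to Creutzig--Wojtaszczyk~\cite{cw} for a proof. Your argument is correct and is precisely the classical one: the Chebyshev-center algorithm for the upper bound, and the ``run the adaptive algorithm on the zero input'' trick combined with symmetry for the lower bound. One small remark worth making explicit is that $0\in F$ (needed so that $P_0$ is defined) follows from convexity and symmetry as soon as $F\neq\emptyset$, since $\tfrac12(f+(-f))=0$ for any $f\in F$; otherwise your proof is complete as written.
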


Under the assumptions of Lemma~\ref{Bakhvalov},
we even know that linear algorithms are optimal.
That is, for each $n\in\IN$, there are functions 
$g_1,\hdots,g_n \in L_\infty\brackets{[0,1]^d}$
and points $x_1,\hdots,x_n \in [0,1]^d$
such that the algorithm
\begin{equation*}
 A_n^*:F\to L_\infty\brackets{[0,1]^d},
 \qquad
 A_n^*(f)= \sum_{i=1}^n f(x_i) g_i
\end{equation*}
satisfies
\begin{equation*}
 e^{\rm app}(A_n^*,F)= e^{\rm app}(n,F).
\end{equation*}
These results go back to Bakhvalov~\cite{b}
and Smolyak~\cite{s}.
We refer to Creutzig and Wojtaszczyk~\cite{cw} for a proof.

We also talk about numerical integration on $F$
in the worst case setting.
Similarly to numerical approximation,
an algorithm for numerical integration
is a functional $A=\varphi\circ N$
built from an information map $N:F\to \IR^n$
like above and an arbitrary map 
$\varphi:\IR^n\to \IR$.
Its cost is $n$ and
its worst case error is
\begin{equation*}
 e^{\rm int}(A,F) = \sup\limits_{f\in F} 
 \abs{A(f) - \int_{[0,1]^d} f(x)~\d x}.
\end{equation*}

The $n$th minimal worst case error
is the smallest worst case error 
of algorithms using at most $n$ function values,
that is
$$
 e^{\rm int}(n,F)=\inf\set{e^{\rm int}(A,F) \mid 
 \cost(A)\leq n}.
$$
The minimal number 
of function values that is needed to guarantee
an $\varepsilon$-approximation 
of the integral of a function from $F$ 
is formally defined as
\begin{equation*}
 n^{\rm int}(\varepsilon,F)
 = \min\set{n\in\IN_0 \mid e^{\rm int}(n,F) \leq \varepsilon }.
\end{equation*}

\section{Upper bounds}

To estimate $e^{\rm app}\brackets{n,\C^r_d}$ from above, 
Lemma~\ref{Bakhvalov} says that
we can choose any point set $P$
with cardinality at most $n$ and 
give an upper bound on the 
maximal value of a function $f\in\C^r_d$
that vanishes on $P$.
In fact, we can choose any 
point set $Q$ with cardinality at most $n/(d+1)^{r-1}$
and assume that not only $f$ but all its derivatives
of order less than $r$ are arbitrarily 
small on $Q$.
% In fact, we can choose any point set $Q$ with cardinality at most $\frac{n}{(d+1)^{r-1}}$
% and any precision $\delta>0$ and assume that not only $f$ but all its derivatives
% of order less than $r$ are bounded by $\delta$ on $Q$:
More precisely, for any $\delta>0$, any $r\in\IN_0$, $d\in\IN$ and $Q\subset [0,1]^d$,
we define the subclasses
\begin{equation*}
 \C^r_d(Q,\delta) =
 \set{f\in\C^r_d \mid \abs{D^\alpha f(x)}\leq \delta^{2^{r-\abs{\alpha}-1}}
 \text{ for all } x\in Q \text{ and } \abs{\alpha} < r}
\end{equation*}
and the auxiliary quantities
\begin{equation*}
 E\brackets{Q,\C^r_d,\delta}=
 \sup\limits_{f\in \C^r_d(Q,\delta)} \norm{f}_\infty
 \qquad\text{and}\qquad
 E\brackets{Q,\C^r_d}= 
 \lim\limits_{\delta\downarrow 0} E\brackets{Q,\C^r_d,\delta}
\end{equation*}
and obtain the following.

\begin{lemma}
\label{small derivatives}
Let $d\in \IN$, $r\in\IN$ and $n\in \IN_0$.
If the cardinality of $Q\subset [0,1]^d$
is at most $n/(d+1)^{r-1}$, then
%$\abs{Q} \leq \frac{n}{(d+1)^{r-1}}$:
 \begin{equation*}
%   \abs{Q} \leq \frac{n}{(d+1)^{r-1}}
%   \quad\Rightarrow\quad
  e^{\rm app}\brackets{n,\C^r_d} \leq E\brackets{Q,\C^r_d}.
 \end{equation*}
\end{lemma}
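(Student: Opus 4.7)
The plan is to apply Lemma~\ref{Bakhvalov} to a carefully enlarged point set. For every $\delta>0$, I would construct $P_\delta\subset [0,1]^d$ of cardinality at most $n$ such that every $f\in\C^r_d$ with $f|_{P_\delta}=0$ already belongs to $\C^r_d(Q,\delta)$. Granted this, Lemma~\ref{Bakhvalov} yields $e^{\rm app}(n,\C^r_d)\le E(Q,\C^r_d,\delta)$ for every $\delta>0$, and letting $\delta\downarrow 0$ gives the claim, since $E(Q,\C^r_d)=\lim_{\delta\downarrow 0} E(Q,\C^r_d,\delta)$.

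Around each $q\in Q$ I would place the cloud $P_q^h=\set{q+h\,\diag(\varepsilon_q)\gamma : \gamma\in\IN_0^d,\ \abs{\gamma}\le r-1}$, where the sign vector $\varepsilon_q\in\{-1,+1\}^d$ is chosen so that $P_q^h\subset [0,1]^d$. Counting multi-indices gives $\abs{P_q^h}=\binom{d+r-1}{r-1}\le (d+1)^{r-1}$, using the elementary inequality $d+k\le k(d+1)$ for $k\ge 1$ and $d\ge 0$ to bound the rising factorial termwise. Setting $P_\delta=\bigcup_{q\in Q}P_q^{h_\delta}$ with $h_\delta>0$ small enough to render the clouds pairwise disjoint, one has $\abs{P_\delta}\le\abs{Q}(d+1)^{r-1}\le n$.

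Now suppose $f\in\C^r_d$ vanishes on $P_\delta$. For every $\alpha\in\IN_0^d$ with $\abs{\alpha}\le r-1$, the mixed finite difference $\Delta^\alpha_{h_\delta}f(q)=\sum_{\gamma\le\alpha}(-1)^{\abs{\alpha-\gamma}}\binom{\alpha}{\gamma}f(q+h_\delta\diag(\varepsilon_q)\gamma)$ vanishes, because every evaluation point lies in $P_\delta$. Expanding $f$ around $q$ by Taylor's formula up to order $r-1$ and using $\norm{D^\beta f}_\infty\le 1$ for $\abs{\beta}=r$ to control the remainder, the standard annihilation property---the inner sum $\sum_{\gamma\le\alpha}(-1)^{\abs{\alpha-\gamma}}\binom{\alpha}{\gamma}\gamma^\beta$ equals $\alpha!$ for $\beta=\alpha$ and vanishes unless $\beta\ge\alpha$ componentwise---yields an identity $D^\alpha f(q)=-\sum_{\beta>\alpha,\,\abs{\beta}<r}c_{\alpha\beta}\,h_\delta^{\abs{\beta}-\abs{\alpha}}\,D^\beta f(q)+R_\alpha$ with $\abs{R_\alpha}\le C_{r,d}\,h_\delta^{r-\abs{\alpha}}$ and constants $c_{\alpha\beta}$ depending only on $r$ and $d$. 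Running a downward induction on $\abs{\alpha}$ from $r-1$ to $0$, the inductive hypothesis $\abs{D^\beta f(q)}\le\delta^{2^{r-\abs{\beta}-1}}$ for all $\abs{\beta}>\abs{\alpha}$ combined with the identity gives $\abs{D^\alpha f(q)}\le C'_{r,d}\,h_\delta\,\delta^{2^{r-\abs{\alpha}-2}}+C_{r,d}\,h_\delta^{r-\abs{\alpha}}$, and the choice $h_\delta=c_{r,d}\,\delta^{2^{r-2}}$ makes this at most $\delta^{2^{r-\abs{\alpha}-1}}$ for every $\abs{\alpha}<r$ simultaneously. Hence $f\in\C^r_d(Q,\delta)$, which closes the reduction.

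The main obstacle is making the bootstrap close with a single scale. The doubly-exponential bounds in the definition of $\C^r_d(Q,\delta)$ are precisely calibrated for this purpose: the identity $2^{r-\abs{\alpha}-1}=2\cdot 2^{r-\abs{\alpha}-2}$ ensures that the dominant term $h_\delta\cdot\delta^{2^{r-\abs{\alpha}-2}}$ in the recursion never exceeds the target $\delta^{2^{r-\abs{\alpha}-1}}$, reducing all constraints to the single condition $h_\delta\lesssim\delta^{2^{r-2}}$. The verification of the finite-difference Taylor identity and the bookkeeping of remainder terms is routine.
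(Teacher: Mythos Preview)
Your proposal is correct and takes a genuinely different route from the paper. The paper constructs $P$ by an \emph{iterated} enlargement with $r-1$ distinct scales: for $M\subset[0,1]^d$ it sets $M[h]=M\cup\{x\pm he_j : x\in M,\ j\le d\}$ (one signed neighbour per coordinate direction, the sign chosen to stay inside the cube), so that $|M[h]|\le(d+1)|M|$, and then takes $P=Q[h_1,\ldots,h_{r-1}]$ with $h_i=3(\delta/9)^{2^{i-1}}$. The only analytic ingredient is the mean value theorem, which yields: if $f\in\C^2_d$ and $|f|\le h^2$ on $M[h]$, then $|\partial_j f|\le 3h$ on $M$. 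Iterating this $r-1$ times peels off one derivative per step, and the relation $3h_i=h_{i-1}^2$ is precisely what manufactures the exponents $2^{r-|\alpha|-1}$. Your construction instead uses a \emph{single} scale and the full simplex stencil $\{q+h\gamma:|\gamma|\le r-1\}$, invoking the Taylor expansion to order $r$ together with the annihilation property of forward differences; the same cardinality bound $(d+1)^{r-1}|Q|$ comes from $\binom{d+r-1}{r-1}\le(d+1)^{r-1}$. The paper's route is more elementary (no combinatorial identities, only the mean value theorem) and makes the doubly-exponential thresholds emerge transparently from the scale recursion; yours is closer in spirit to the divided-difference algorithm mentioned in the paper's remark on algorithms and has the conceptual cleanness of a single step size.

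One small point of bookkeeping: your displayed estimate $|D^\alpha f(q)|\le C'_{r,d}\,h_\delta\,\delta^{2^{r-|\alpha|-2}}+C_{r,d}\,h_\delta^{r-|\alpha|}$ is not literally valid for arbitrary $h_\delta$, because the recursion also contains the intermediate contributions $h_\delta^{m}\,\delta^{2^{r-|\alpha|-m-1}}$ for $2\le m\le r-1-|\alpha|$, and these are not absorbed by the $m=1$ term before $h_\delta$ has been fixed. With your choice $h_\delta\asymp\delta^{2^{r-2}}$, however, a direct check shows $m\cdot 2^{r-2}+2^{r-|\alpha|-m-1}\ge 2^{r-|\alpha|-1}$ for every relevant $m$, so each term individually meets the target $\delta^{2^{r-|\alpha|-1}}$ and the argument closes as you claim.
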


% \begin{lemma}
% \label{small derivatives}
%  For any finite subset $Q$ of $[0,1]^d$, $r\in\IN$ and $\delta>0$, there is
%  a finite subset $P\brackets{Q,\delta,r}$ of $[0,1]^d$ with the following properties:
%  \begin{itemize}
%   \item The cardinality of $P\brackets{Q,\delta,r}$ is at most $(d+1)^{r-1}\abs{Q}$.
%   \item If $f\in\C^r_d$ vanishes on $P\brackets{Q,\delta,r}$,
%   its derivatives of order less than $r$ are bounded by $\delta$ on $Q$.
%  \end{itemize}
% \end{lemma}

\begin{proof}
%  The statement is trivial for $r=0$ since
%  \begin{equation*}
%   e^{\rm app}\brackets{n,\C^0_d} = E\brackets{Q,\C^0_d} = 1.
%  \end{equation*}
%  In any other case, 
 Let $\delta \in (0,1)$.
 We will construct a point set $P\subset [0,1]^d$
 with cardinality at most $n$ such that
 any $f\in\C^r_d$ with $f\vert_P=0$ is contained
 in $\C^r_d(Q,\delta)$.
 Then Lemma~\ref{Bakhvalov} yields
 \begin{equation*}
  e^{\rm app}\brackets{n,\C^r_d} \leq
  \sup\limits_{f\in \C^r_d:\, f\vert_P=0} \norm{f}_\infty
  \leq \sup\limits_{f\in \C^r_d(Q,\delta)} \norm{f}_\infty
  = E\brackets{Q,\C^r_d,\delta}.
 \end{equation*}
 Letting $\delta$ tend to zero yields the statement.
 
 If $r=1$, we can choose $P=Q$.
 Let us start with the case $r=2$.
 Given a set $M\subset [0,1]^d$ and $h\in (0,1/2]$, we define
 \begin{equation*}
  M[h]= M 
  \cup\bigcup_{\substack{(x,j)\in M\times\set{1\dots d}\\x+he_j\in [0,1]^d}} \set{x+he_j}\
  \cup\bigcup_{\substack{(x,j)\in M\times\set{1\dots d}\\x+he_j\not\in [0,1]^d}} \set{x-he_j}
 .\end{equation*}
 Obviously, the cardinality of $M[h]$ is at most $(d+1)\abs{M}$.
 Furthermore, we have %the following property
 \begin{equation}
 \label{mean value}
   f\in\C^2_d\text{ with }\abs{f}\leq h^2\text{ on }M[h]\quad
   \Rightarrow\quad
    \abs{\frac{\partial f}{\partial x_j}} \leq 3h\text{ on }M\text{ for }j=1\dots d
 .\end{equation}
 This is a simple consequence of the mean value theorem:
 For any $j\in\set{1\dots d}$ and $x\in M$ with $x+he_j\in[0,1]^d$ there is some $\eta\in (0,h)$ with
 \begin{equation*}
  \abs{\frac{\partial f}{\partial x_j}\brackets{x+\eta e_j}}
  = \abs{\frac{f\brackets{x+h e_j}-f(x)}{h}}
  \leq 2h.
 \end{equation*}
 The same estimate holds for some $\eta\in(-h,0)$, if $x+he_j\not\in[0,1]^d$.
 The fundamental theorem of calculus yields
 \begin{equation*}
  \abs{\frac{\partial f}{\partial x_j}\brackets{x}}
  \leq \abs{\frac{\partial f}{\partial x_j}\brackets{x+\eta e_j}}
  + \abs{\eta}\cdot \max\limits_{\abs{t}\leq\eta} \abs{\frac{\partial^2 f}{\partial x_j^2}\brackets{x+te_j}}
  \leq 3h
 .\end{equation*}
 This means that we can choose $P=Q\left[\delta/3\right]$.
 
 For $r>2$ we repeat this procedure $r-1$ times.
 We use the notation
 \begin{equation*}
  M\left[h_1,\dots,h_{i}\right]=M\left[h_1,\dots,h_{i-1}\right]\left[h_i\right]
 \end{equation*}
 for $i>1$. We choose the point set
 $$
 P=Q\left[h_1,\dots,h_{r-1}\right], \quad \text{where} \quad
 h_i=3(\delta/9)^{2^{i-1}}
 $$
 for $i=1\hdots r-1$.
 Note that $3h_i=h_{i-1}^2$ for each $i\geq 2$.
 Clearly, the cardinality of $P$ is at most $(d+1)^{r-1}\abs{Q}$
 and hence bounded by $n$.
 Let $f\in\C^r_d$ vanish on $P$ and let 
 $\frac{\partial^\ell f}{\partial x_{j_1}\dots\partial x_{j_\ell}}$
 be any derivative of order $\ell <r$.
 Fact (\ref{mean value}) yields:\newline
 \begin{align*}
   &f\in C^r_d
   &&\text{ with }\abs{f}=0\leq h_{r-1}^2
   &&\text{ on }Q\left[h_1,\dots,h_{r-1}\right]\\
   \Rightarrow\
   &\frac{\partial f}{\partial x_{j_1}}\in C^{r-1}_d
   &&\text{ with }\abs{\frac{\partial f}{\partial x_{j_1}}}\leq 3h_{r-1} = h_{r-2}^2
   &&\text{ on }Q\left[h_1,\dots,h_{r-2}\right]\\
   \Rightarrow\
   &\frac{\partial^2 f}{\partial x_{j_1}\partial x_{j_2}}\in C^{r-2}_d
   &&\text{ with }\abs{\frac{\partial^2 f}{\partial x_{j_1}\partial x_{j_2}}}\leq 3h_{r-2}
   &&\text{ on }Q\left[h_1,\dots,h_{r-3}\right]\\
   \Rightarrow\ & &&\hdots &&\\
   \Rightarrow\
   &\frac{\partial^\ell f}{\partial x_{j_1}\dots\partial x_{j_\ell}}\in C^{r-\ell}_d
   &&\text{ with }\abs{\frac{\partial^\ell f}{\partial x_{j_1}\dots\partial x_{j_\ell}}}\leq 3h_{r-\ell}
   &&\text{ on }Q\left[h_1,\dots,h_{r-\ell-1}\right]
 .\end{align*}
 Since $Q$ is contained in $Q\left[h_1,\dots,h_{r-\ell-1}\right]$ and 
 $3h_{r-\ell}\leq \delta^{2^{r-\ell-1}}$, the lemma is proven.
\end{proof}

We can prove the desired upper bounds on $e\brackets{n,\C^r_d}$
by choosing $Q$ as a regular grid. 
%with almost $\frac{n}{(d+1)^{r-1}}$ lattice points. %and a precision $\delta\leq \frac{1}{2N^r}$.
We set
\begin{equation*}
 Q_m^d=\set{0,1/m,2/m,\hdots,1}^d
\end{equation*}
for $m\in\IN$. The following recursive formula is crucial.

\begin{lemma}
 \label{two step induction}
 Let $m\in \IN$, $d\geq 2$ and $r\geq 2$. Then
 \begin{equation*}
  E\brackets{Q_m^d,\C^r_d}
  \leq E\brackets{Q_m^{d-1},\C^r_{d-1}} + 
  \frac{1}{8m^2}\, E\brackets{Q_m^d,\C^{r-2}_d}.
 \end{equation*}
\end{lemma}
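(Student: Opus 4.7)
The plan is to interpolate $f$ linearly in the last coordinate between two adjacent grid points. Fix $\delta\in(0,1)$ and take any $f\in\C^r_d(Q_m^d,\delta)$, and let $x=(x',x_d)\in[0,1]^{d-1}\times[0,1]$. Choose $k\in\{0,\ldots,m-1\}$ with $t_0:=k/m\leq x_d\leq (k+1)/m=:t_1$, so that $x_d=\alpha t_0+(1-\alpha)t_1$ for some $\alpha\in[0,1]$. I split $f(x)=g(x')+\bigl(f(x',x_d)-g(x')\bigr)$ using the convex combination
\begin{equation*}
 g(y):=\alpha\,f(y,t_0)+(1-\alpha)\,f(y,t_1), \qquad y\in[0,1]^{d-1}.
\end{equation*}

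The first piece is handled by the inductive class. Partial derivatives in the first $d-1$ variables commute with the convex combination, so $g\in\C^r_{d-1}$. Moreover, $(q',t_0)$ and $(q',t_1)$ belong to $Q_m^d$ for every $q'\in Q_m^{d-1}$, so all derivatives $D^\beta g$ with $\abs{\beta}<r$ inherit the smallness estimate on $Q_m^{d-1}$ with the same exponent $\delta^{2^{r-\abs{\beta}-1}}$. This places $g$ in $\C^r_{d-1}(Q_m^{d-1},\delta)$ and yields $\abs{g(x')}\leq E(Q_m^{d-1},\C^r_{d-1},\delta)$.

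For the remainder I apply Taylor's theorem to the one-variable map $t\mapsto f(x',t)$ about $t=x_d$. In the convex combination the linear term drops out, because $\alpha(t_0-x_d)+(1-\alpha)(t_1-x_d)=0$, leaving only the quadratic term involving $\partial_d^2 f$. A direct computation using $(t_0-x_d)^2=(1-\alpha)^2/m^2$ and $(t_1-x_d)^2=\alpha^2/m^2$ gives the prefactor $\alpha(1-\alpha)/(2m^2)\leq 1/(8m^2)$. Since $\partial_d^2 f\in\C^{r-2}_d$ and any derivative of it of order less than $r-2$ is a derivative of $f$ of order less than $r$, one checks that $\partial_d^2 f\in\C^{r-2}_d(Q_m^d,\delta)$, whence $\norm{\partial_d^2 f}_\infty\leq E(Q_m^d,\C^{r-2}_d,\delta)$. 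Together this yields $\abs{f(x)-g(x')}\leq \frac{1}{8m^2}E(Q_m^d,\C^{r-2}_d,\delta)$.

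Combining the two bounds and taking the supremum over $f$ and $x$ gives the claimed inequality with $E(\cdot,\cdot,\delta)$ in place of $E(\cdot,\cdot)$, and letting $\delta\downarrow 0$ finishes the argument. There is no real obstacle; the only item needing care is the bookkeeping of the exponents $2^{r-\abs{\alpha}-1}$ appearing in the definition of the auxiliary classes, to make sure that $g$ and $\partial_d^2 f$ really sit in $\C^r_{d-1}(Q_m^{d-1},\delta)$ and $\C^{r-2}_d(Q_m^d,\delta)$ respectively, with the same parameter~$\delta$.
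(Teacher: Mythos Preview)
Your proof is correct, but it takes a genuinely different route from the paper's. The paper locates a point $z$ where $\abs{f}$ attains its maximum and splits into two cases: if $z_d\in\{0,1\}$ the restriction to that face already lies in $\C^r_{d-1}(Q_m^{d-1},\delta)$; if $z_d\in(0,1)$ then $\partial_d f(z)=0$, so a one-step Taylor expansion from $z$ to the nearest grid level $y_d$ has no linear term and the quadratic remainder contributes $\frac{1}{8m^2}\norm{\partial_d^2 f}_\infty$. You instead work at an arbitrary point and kill the linear Taylor term by a convex combination of the two neighbouring grid levels, with the same prefactor $\alpha(1-\alpha)/(2m^2)\le 1/(8m^2)$ emerging from the interpolation weights.

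Your argument is slightly slicker: it avoids the interior/boundary case split, and it never appeals to the extremality of $z$. The paper's version, on the other hand, makes the mechanism (``first derivative vanishes at the maximum'') more transparent and only ever needs a single grid hyperplane rather than a convex combination of two. The bookkeeping you flag---that $g\in\C^r_{d-1}(Q_m^{d-1},\delta)$ and $\partial_d^2 f\in\C^{r-2}_d(Q_m^d,\delta)$ with the \emph{same} $\delta$---indeed goes through, since the exponents match exactly after the shift $|\alpha|\mapsto|\alpha|+2$.
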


\begin{proof}
 We will prove for any $\delta>0$ that
 \begin{equation}
 \label{recursive formula}
  E\brackets{Q_m^d,\C^r_d,\delta}
  \leq E\brackets{Q_m^{d-1},\C^r_{d-1},\delta} + 
  \frac{1}{8m^2}\, E\brackets{Q_m^d,\C^{r-2}_d,\delta}.
 \end{equation}
 Letting $\delta$ tend to zero yields the statement.
  
 Let $f\in \C^r_d\brackets{Q_m^d,\delta}$.
 We need to show that $\norm{f}_\infty$ is bounded
 by the right hand side of \eqref{recursive formula}.
 Since $f$ is continuous, there is some $z\in[0,1]^d$
 such that $\abs{f(z)}=\norm{f}_\infty$.
 We distinguish two cases.
 
 If $z_d\in \set{0,1}$,
 the restriction $f\vert_{H}$ of $f$ to the hyperplane
 \begin{equation*}
  H=\set{x\in[0,1]^d \mid x_d=z_d}
 \end{equation*}
 is contained in $\C^r_{d-1}\brackets{Q_m^{d-1},\delta}$.
 This implies that
 \begin{equation*}
  \abs{f(z)}=\norm{f\vert_H}_\infty \leq E\brackets{Q_m^{d-1},\C^r_{d-1},\delta}
 \end{equation*}
 and the statement is proven.
 
 Let us now assume that $z_d\in (0,1)$.
 Then we have $\frac{\partial f}{\partial x_d}(z)=0$.
 We choose $y\in [0,1]^d$ such that $y_j=z_j$ for $j<d$
 and $y_d\in Q_m$ with $\abs{y_d-z_d}\leq 1/(2m)$.
 The restriction $f\vert_{H_0}$ of $f$ to the hyperplane
 \begin{equation*}
  H_0=\set{x\in[0,1]^d \mid x_d=y_d}
 \end{equation*}
 is contained in $\C^r_{d-1}\brackets{Q_m^{d-1},\delta}$.
 This implies that
 \begin{equation*}
  \abs{f(y)}=\norm{f\vert_{H_0}}_\infty \leq E\brackets{Q_m^{d-1},\C^r_{d-1},\delta}.
 \end{equation*}
 Moreover, the second derivative $\frac{\partial^2 f}{\partial x_d^2}$
 is contained in $\C^{r-2}_d\brackets{Q_m^d,\delta}$ and hence
 \begin{equation*}
  \norm{\frac{\partial^2 f}{\partial x_d^2}}_\infty \leq 
  E\brackets{Q_m^d,\C^{r-2}_d,\delta}.
 \end{equation*}
 By Taylor's theorem, there is some $\xi$ on the line segment between $y$ and $z$
 such that 
 \begin{equation*}
  f(y)= f(z) + \frac{1}{2} \frac{\partial^2 f}{\partial x_d^2}(\xi)
  \cdot (y_d-z_d)^2.
  %\int\limits_0^{y_d-z_d} (y_d-z_d-t)\cdot
  %\frac{\partial^2 f}{\partial x_d^2}(m+t e_d)~\d t.
 \end{equation*}
 We obtain
 \begin{equation*}
 \begin{split}
  \abs{f(z)}\, &\leq\, \abs{f(y)} \,+\,
  \frac{(y_d-z_d)^2}{2}\cdot \norm{\frac{\partial^2 f}{\partial x_d^2}}_\infty \\
  &\leq\, E\brackets{Q_m^{d-1},\C^r_{d-1},\delta} \,+\, 
  \frac{1}{8m^2}\cdot E\brackets{Q_m^d,\C^{r-2}_d,\delta},
 \end{split}
 \end{equation*}
 as it was to be proven.
%  \begin{align*}
%   \abs{f(z)} &\leq &&\abs{f(y)} &&+
%   &&\frac{(y_d-z_d)^2}{2} &&\cdot &&\norm{\frac{\partial^2 f}{\partial x_d^2}}_\infty
%   &&\hspace*{10mm}\\
%   &\leq &&E\brackets{Q_m^{d-1},\C^r_{d-1},\delta} &&+ 
%   &&\frac{1}{8m^2} &&\cdot &&E\brackets{Q_m^d,\C^{r-2}_d,\delta}&&\hspace*{10mm}
%  \end{align*}
\end{proof}

By a double induction on $r$ and $d$ we obtain the following
result for even $r$.

\begin{lemma}
 \label{even r lemma}
 Let $d\in\IN$, $m\in\IN$ and $r\in\IN_0$ be even. Then
 \begin{equation*}
  E\brackets{Q_m^d,\C^r_d} \leq
  \frac{e d^{r/2}}{(2m)^r}.
 \end{equation*}
\end{lemma}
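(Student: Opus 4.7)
The plan is to proceed by double induction: an outer induction on the even integer $r$ and, for each such $r$, an inner induction on the dimension $d$, with Lemma~\ref{two step induction} supplying the recursive step.

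The outer base case $r=0$ is trivial: $\C^0_d$ is just the unit ball of $C([0,1]^d)$ in the sup norm, so $E(Q_m^d,\C^0_d)\le 1\le e$, matching the claimed bound $e\,d^0/(2m)^0=e$. Fix now an even $r\ge 2$ and assume the bound for $r-2$ in every dimension. For the inner base $d=1$ I would invoke Taylor's theorem on an interval: in the limit $\delta\downarrow 0$, every admissible $f$ has vanishing derivatives of order less than $r$ at every point of $Q_m^1$, while $\Vert f^{(r)}\Vert_\infty\le 1$ and each $x\in[0,1]$ lies within distance $1/(2m)$ of some grid point; the Taylor remainder then gives $|f(x)|\le 1/(r!(2m)^r)\le e/(2m)^r$.

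For the inner step $d\ge 2$, one application of Lemma~\ref{two step induction} together with the two induction hypotheses yields
\begin{equation*}
 E(Q_m^d,\C^r_d)\le \frac{e(d-1)^{r/2}}{(2m)^r}+\frac{1}{8m^2}\cdot\frac{e\,d^{(r-2)/2}}{(2m)^{r-2}}
 =\frac{e}{(2m)^r}\Bigl[(d-1)^{r/2}+\tfrac{1}{2}d^{r/2-1}\Bigr],
\end{equation*}
so the whole argument collapses to the elementary inequality $(d-1)^{r/2}+\tfrac{1}{2}d^{r/2-1}\le d^{r/2}$ for $d\ge 2$ and even $r\ge 2$. Dividing through by $d^{r/2}$ recasts this as $(1-1/d)^{r/2}\le 1-1/(2d)$, which I would deduce from the two-step chain $(1-1/d)^{r/2}\le 1-1/d\le 1-1/(2d)$, valid because $r/2\ge 1$ and $0<1-1/d<1$ for the first inequality and trivially for the second.

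The point that needs a little care is precisely this last inequality. The natural mean-value bound $d^{r/2}-(d-1)^{r/2}\ge \tfrac{r}{2}(d-1)^{r/2-1}$ is too weak to close the induction once $r$ is large compared to $d$ (it already fails at $d=2$, $r\ge 10$), whereas the monotonicity estimate $(1-1/d)^{r/2}\le 1-1/d$ sidesteps the issue in one line. This comfortable slack is also the reason the constant $e$ in the statement, though non-optimal, suffices.
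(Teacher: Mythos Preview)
Your proof is correct and follows essentially the same approach as the paper: a double induction combining the trivial case $r=0$, the one-dimensional Taylor estimate, and the recursion from Lemma~\ref{two step induction}, closing with the same elementary inequality $(1-1/d)^{r/2}+1/(2d)\le 1$. The only cosmetic difference is that you run the outer induction on $r$ and the inner one on $d$, whereas the paper reverses this order; both orderings work equally well here.
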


\begin{proof}
 We give a proof by induction on $d$.
 Let $\delta>0$ and $f\in \C^r_1\brackets{Q_m,\delta}$
 for some even number $r$.
 Since $f$ is continuous, there is some $z\in[0,1]$
 such that $\abs{f(z)}=\norm{f}_\infty$.
 Let $y\in Q_m$ with $\abs{y-z}<1/(2m)$.
 By Taylor's theorem, there is some $\xi$
 between $y$ and $z$ such that
 $$
 f(z)=\sum_{k=0}^{r-1}\frac{f^{(k)}(y)}{k!} (z-y)^k 
 + \frac{f^{(r)}(\xi)}{r!} (z-y)^r .
 $$
 Using that $\vert f^{(k)}(y)\vert\leq \delta^{2^{r-k-1}} \leq \delta^{r-k}$,
 we obtain for $\delta \leq 1/(2m)$ that
 $$
 \norm{f}_\infty \leq \sum_{k=0}^r\frac{\delta^{r-k}}{k!} \brackets{\frac{1}{2m}}^k 
 \leq \brackets{\frac{1}{2m}}^r \sum_{k=0}^r\frac{1}{k!}
 \leq \frac{e}{(2m)^r}.
 $$
 Since this is true for any such $f$ and any $\delta\leq 1/(2m)$,
 this proves the case $d=1$.
 
 Let now $d\geq 2$. We assume that the statement holds for every dimension
 smaller than $d$.
 To show that it also holds in dimension $d$,
 we use induction on $r$.
 For $r=0$ the statement is trivial since $E(Q_m^d,\C^0_d)=1$.
 Let $r\geq 2$ be even and assume that the statement
 holds in dimension $d$ for any even smoothness smaller than $r$.
 Lemma~\ref{two step induction} yields
 \begin{align*}
  E\brackets{Q_m^d,\C^r_d}
  \leq \frac{e (d-1)^{r/2}}{(2m)^r} + 
  \frac{1}{8m^2}\, \frac{e d^{r/2-1}}{(2m)^{r-2}}\\
  = \frac{e d^{r/2}}{(2m)^r}
  \brackets{\brackets{1-\frac{1}{d}}^{r/2}+\frac{1}{2d}}
  \leq \frac{e d^{r/2}}{(2m)^r} ,
 \end{align*}
 which completes the inner and therefore the outer induction.
\end{proof}

This immediately yields the upper bound of 
Theorem~\ref{main theorem}.

\begin{proof}[Proof of Theorem~\ref{main theorem} (Upper Bound)]
 Let $d\in\IN$, $r\in\IN$ be even and $\varepsilon >0$. We set
 $$
 n=(d+1)^{r-1} (m+1)^d, \quad\text{where}\quad
 m=\left\lceil\frac{e^{1/r}}{2} \sqrt{d} \varepsilon^{-1/r}\right\rceil.
 $$
 Lemmas~\ref{small derivatives} and \ref{even r lemma} yield
 $$
 e^{\rm app}\brackets{n,\C^r_d} 
 \leq E\brackets{Q_m^d,\C^r_d}
 \leq \frac{e d^{r/2}}{(2m)^r}
 \leq \varepsilon.
 $$
 Hence,
 $$
 n^{\rm app}\brackets{\varepsilon,\C^r_d} \leq n
 $$
 and this implies the result.
\end{proof}

To derive the upper bounds for odd $r$,
we use the following recursive formula.

\begin{lemma}
 \label{odd r lemma}
 Let $m\in \IN$, $d\in \IN$ and $r\in\IN$. Then
 \begin{equation*}
  E\brackets{Q_m^d,\C^r_d}
  \leq \frac{d}{2m}\,
  E\brackets{Q_m^d,\C^{r-1}_d}.
 \end{equation*}
\end{lemma}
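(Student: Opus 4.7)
The plan is to mirror the proof of Lemma~\ref{two step induction}, but replace the second-order Taylor expansion (which gained a factor $1/(8m^2)$ at the cost of two derivative orders) with a first-order Taylor expansion, gaining a factor $d/(2m)$ at the cost of only one derivative order. As in the proof of Lemma~\ref{two step induction}, I would work with the $\delta$-parametrized quantity $E\brackets{Q_m^d,\C^r_d,\delta}$ and aim to establish, for every $\delta \in (0,1)$,
\begin{equation*}
 E\brackets{Q_m^d,\C^r_d,\delta}
 \;\leq\; \delta^{2^{r-1}} \,+\, \frac{d}{2m}\, E\brackets{Q_m^d,\C^{r-1}_d,\delta},
\end{equation*}
and then let $\delta \downarrow 0$ (the additive term vanishes while $E\brackets{Q_m^d,\C^{r-1}_d,\delta}$ converges to $E\brackets{Q_m^d,\C^{r-1}_d}$).

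To establish this inequality, I would fix $f\in \C^r_d(Q_m^d,\delta)$ and, by continuity on $[0,1]^d$, pick $z\in [0,1]^d$ with $\abs{f(z)}=\norm{f}_\infty$. Since $Q_m^d$ has mesh size $1/m$, there exists $y\in Q_m^d$ with $\abs{z_j-y_j}\leq 1/(2m)$ for each $j=1,\dots,d$. The mean value theorem applied along the segment from $y$ to $z$ furnishes a point $\xi$ on that segment with
\begin{equation*}
 f(z) - f(y) = \sum_{j=1}^{d} \frac{\partial f}{\partial x_j}(\xi)\,(z_j - y_j).
\end{equation*}
The key observation is that each partial derivative $\partial f/\partial x_j$ belongs to $\C^{r-1}_d(Q_m^d,\delta)$ with the \emph{same} parameter $\delta$: a multi-index $\beta$ with $\abs{\beta}<r-1$ corresponds to an order-$(\abs{\beta}+1)$ derivative of $f$, and the bound $\delta^{2^{r-(\abs{\beta}+1)-1}} = \delta^{2^{(r-1)-\abs{\beta}-1}}$ matches exactly the required bound for the class $\C^{r-1}_d(Q_m^d,\delta)$. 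Hence $\|\partial f/\partial x_j\|_\infty \leq E\brackets{Q_m^d,\C^{r-1}_d,\delta}$ for every $j$.

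Combining these observations, together with the bound $\abs{f(y)}\leq \delta^{2^{r-1}}$ coming from the defining condition of $\C^r_d(Q_m^d,\delta)$ at the order-zero multi-index, yields
\begin{equation*}
 \norm{f}_\infty \,=\, \abs{f(z)} \,\leq\, \delta^{2^{r-1}} \,+\, \sum_{j=1}^d \frac{1}{2m}\, E\brackets{Q_m^d,\C^{r-1}_d,\delta}
 \,=\, \delta^{2^{r-1}} \,+\, \frac{d}{2m}\, E\brackets{Q_m^d,\C^{r-1}_d,\delta}.
\end{equation*}
Taking the supremum over $f\in \C^r_d(Q_m^d,\delta)$ and letting $\delta\downarrow 0$ gives the claim. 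The step I expect to require the most care is the verification that differentiating $f$ once produces a function in $\C^{r-1}_d(Q_m^d,\delta)$ with the correct parameter $\delta$; this is a bookkeeping check on the exponents $2^{r-\abs{\alpha}-1}$ in the definition of $\C^r_d(Q,\delta)$, but it is precisely the reason that this class was designed the way it was, and no other part of the argument presents a real obstacle.
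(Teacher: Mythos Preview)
Your proof is correct and essentially identical to the paper's: you establish the $\delta$-inequality
$E(Q_m^d,\C^r_d,\delta)\leq \delta^{2^{r-1}}+\tfrac{d}{2m}\,E(Q_m^d,\C^{r-1}_d,\delta)$
by picking $z$ with $|f(z)|=\|f\|_\infty$, a nearest grid point $y\in Q_m^d$, and using that each $\partial f/\partial x_j\in\C^{r-1}_d(Q_m^d,\delta)$, then let $\delta\downarrow 0$. The only cosmetic difference is that you invoke the multivariate mean value theorem along the straight segment $[y,z]$, while the paper integrates along an axis-parallel polygonal chain of $\ell_1$-length $\le d/(2m)$; both routes give the same bound.
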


\begin{proof}
 It suffices to show for any $\delta>0$ that
 \begin{equation*}
  E\brackets{Q_m^d,\C^r_d,\delta}
  \leq \delta^{2^{r-1}} + \frac{d}{2m}\,
  E\brackets{Q_m^d,\C^{r-1}_d,\delta} .
 \end{equation*}
 Letting $\delta$ tend to zero yields the statement.
 Let $f\in\C^r_d\brackets{Q_m^d,\delta}$ and
 let $z\in[0,1]^d$ such that $\abs{f(z)}=\norm{f}_\infty$.
 There is some $y\in Q_m^d$ such that $y$ and $z$
 are connected by an axis-parallel polygonal chain
 of length at most $d/(2m)$.
 For every $j\in\set{1,\dots,d}$, the partial derivative
 $\frac{\partial f}{\partial x_j}$ is 
 contained in $\C^{r-1}_d\brackets{Q_m^d,\delta}$.
 Integrating along the curve yields
 $$
 \abs{f(z)} \leq 
 \abs{f(y)} + \frac{d}{2m} \max\limits_{j=1...d} 
 \norm{\frac{\partial f}{\partial x_j}}_\infty
 \leq \delta^{2^{r-1}} + \frac{d}{2m} E\brackets{Q_m^d,\C^{r-1}_d,\delta}.
 $$
 This proves the lemma.
\end{proof}

Now, the upper bounds of Theorem~\ref{main theorem odd}
follow from the results for even $r$.
Note that the upper bound for $r=1$ is included.

\begin{proof}[Proof of Theorem~\ref{main theorem odd} (Upper Bound)]
 Let $d\in\IN$, $r\in\IN$ be odd and $\varepsilon>0$.
 For any $m\in\IN$,
 Lemma~\ref{even r lemma} and \ref{odd r lemma} yield
 $$
 E\brackets{Q_m^d,\C^r_d} \leq
  \frac{e d^{(r+1)/2}}{(2m)^r}.
 $$
 We set
 $$
 n=(d+1)^{r-1} (m+1)^d, \quad\text{where}\quad
 m=\left\lceil\frac{e^{1/r}}{2} d^{\frac{r+1}{2r}} \varepsilon^{-1/r}\right\rceil.
 $$
 We obtain
 $$
 e^{\rm app}\brackets{n,\C^r_d} 
 \leq E\brackets{Q_m^d,\C^r_d}
 \leq \varepsilon
 $$
 and hence
 $$
 n^{\rm app}\brackets{\varepsilon,\C^r_d} \leq n .
 $$
\end{proof}

We proceed similarly
to prove of the upper bound of Theorem~\ref{side theorem}.
For any $\delta>0$, any $r\in\IN_0$, $d\in\IN$ and $Q\subset [0,1]^d$,
we define the subclasses
\begin{equation*}
 \widetilde\C^r_d(Q,\delta) =
 \set{f\in\widetilde\C^r_d \mid 
 \abs{\partial_{\theta_1}\cdots\partial_{\theta_\ell} f(x)}
 \leq \delta^{2^{r-\ell-1}}
 \text{ for } x\in Q,  \ell < r, \theta_1\hdots\theta_\ell\in S_{d-1}}
\end{equation*}
and the auxiliary quantities
\begin{equation*}
 E\brackets{Q,\widetilde\C^r_d,\delta}=
 \sup\limits_{f\in \widetilde\C^r_d(Q,\delta)} \norm{f}_\infty
 \qquad\text{and}\qquad
 E\brackets{Q,\widetilde\C^r_d}= 
 \lim\limits_{\delta\downarrow 0} E\brackets{Q,\widetilde\C^r_d,\delta}
\end{equation*}
and obtain the following.

\begin{lemma}
\label{small derivatives 2}
Let $d,r\in \IN$ and $n\in\IN_0$.
If the cardinality of $Q\subset [0,1]^d$
is at most $n/(d+1)^{r-1}$, then
\begin{equation*}
 e^{\rm app}\brackets{n,\widetilde\C^r_d} 
 \leq E\brackets{Q,\widetilde\C^r_d}.
\end{equation*}
\end{lemma}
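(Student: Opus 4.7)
The plan is to mimic the proof of Lemma~\ref{small derivatives} almost verbatim, exploiting the inclusion $\widetilde\C^r_d \subset \C^r_d$ (the standard partial derivatives of a function in $\widetilde\C^r_d$ are directional derivatives along the coordinate axes, hence bounded by one). Fix $\delta \in (0,1)$. I would construct a point set $P \subset [0,1]^d$ with $\abs{P} \leq (d+1)^{r-1}\abs{Q} \leq n$ such that every $f \in \widetilde\C^r_d$ with $f\vert_P = 0$ belongs to $\widetilde\C^r_d(Q,\delta)$. Applying Lemma~\ref{Bakhvalov} to the convex symmetric class $\widetilde\C^r_d$ then gives $e^{\rm app}(n,\widetilde\C^r_d) \leq E(Q,\widetilde\C^r_d,\delta)$, and letting $\delta \downarrow 0$ yields the lemma.

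For the construction I reuse the cloud $P = Q[h_1, \ldots, h_{r-1}]$ from the proof of Lemma~\ref{small derivatives}, but driven by an auxiliary parameter $\delta' \in (0,1)$ in place of $\delta$, so that $h_i = 3(\delta'/9)^{2^{i-1}}$. The cardinality bound $\abs{P} \leq (d+1)^{r-1}\abs{Q}$ is unchanged. Since $\widetilde\C^r_d \subset \C^r_d$, the inductive chain of mean-value and fundamental-theorem-of-calculus estimates from the proof of Lemma~\ref{small derivatives} applies verbatim and yields, for every multi-index $\alpha$ with $\abs{\alpha} = \ell < r$ and every $x \in Q$,
\[
\abs{D^\alpha f(x)} \leq 3 h_{r-\ell} = 9(\delta'/9)^{2^{r-\ell-1}}.
\]

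To pass from coordinate-wise bounds to arbitrary directional derivatives, I would expand
\[
\partial_{\theta_1} \cdots \partial_{\theta_\ell} f(x) = \sum_{j_1,\ldots,j_\ell=1}^d (\theta_1)_{j_1} \cdots (\theta_\ell)_{j_\ell}\, \frac{\partial^\ell f}{\partial x_{j_1} \cdots \partial x_{j_\ell}}(x),
\]
and use $\Vert \theta_i \Vert_1 \leq \sqrt{d}\, \Vert \theta_i \Vert_2 = \sqrt{d}$ to obtain
\[
\abs{\partial_{\theta_1} \cdots \partial_{\theta_\ell} f(x)} \leq d^{\ell/2}\, \max_{\abs{\alpha}=\ell} \abs{D^\alpha f(x)} \leq 9\, d^{\ell/2} (\delta'/9)^{2^{r-\ell-1}}.
\]
Choosing $\delta' = \delta / d^{(r-1)/2}$, a short computation shows that the right-hand side is at most $\delta^{2^{r-\ell-1}}$ simultaneously for every $\ell \in \{0,1,\ldots,r-1\}$ (the binding constraint is $\ell = r-1$), so $f \in \widetilde\C^r_d(Q,\delta)$ as required. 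The case $r=1$ is trivial with $P=Q$.

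The only real obstacle is the $d^{\ell/2}$ factor introduced in the last step, but it is harmless: it is absorbed into a fixed (dimension-dependent) rescaling $\delta \mapsto \delta'$ which does not interfere with the limit $\delta \downarrow 0$ used to pass from $E(Q,\widetilde\C^r_d,\delta)$ to $E(Q,\widetilde\C^r_d)$. Dimension factors in this lemma are irrelevant; they will be controlled later by the regular-grid choice of $Q$.
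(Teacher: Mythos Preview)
Your proof is correct and follows essentially the same route as the paper: reuse the cloud $P=Q[h_1,\ldots,h_{r-1}]$ from Lemma~\ref{small derivatives}, exploit $\widetilde\C^r_d\subset\C^r_d$ to control the partial derivatives on $Q$, and then pass to directional derivatives via the $d^{\ell/2}$ bound. The only cosmetic difference is that the paper keeps the original $\delta$ in the construction and concludes $f\in\widetilde\C^r_d(Q,d^{(r-1)/2}\delta)$ before taking $\delta\downarrow 0$, whereas you pre-shrink to $\delta'=\delta/d^{(r-1)/2}$ to land in $\widetilde\C^r_d(Q,\delta)$ directly; this is just a change of variable.
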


\begin{proof}
%  Again, the statement is trivial for $r=0$.
%  In any other case, 
 Let $\delta \in (0,1)$.
 In the proof of Lemma~\ref{small derivatives}
 we constructed a point set $P$
 with cardinality at most $n$ such that
 any $f\in\C^r_d$ with $f\vert_P=0$ is contained
 in $\C^r_d(Q,\delta)$.
 In particular, any $f\in\widetilde\C^r_d$ with $f\vert_P=0$
 satisfies $\abs{D^\alpha f(x)}\leq \delta^{2^{r-\abs{\alpha}-1}}$
 for all $x\in Q$ and $\abs{\alpha} < r$.
 Taking into account that for $x\in [0,1]^d$, $\ell < r$ 
 and  $\theta_1\hdots\theta_\ell\in S_{d-1}$ we have
 $$
 \abs{\partial_{\theta_1}\cdots\partial_{\theta_\ell} f(x)}
 \leq d^{\ell/2} \max\limits_{\abs{\alpha}=\ell} \abs{D^\alpha f(x)} ,
 $$
 we obtain that $f\in \widetilde\C^r_d(Q,d^{\frac{r-1}{2}} \delta)$ and hence
 $$
 e^{\rm app}\brackets{n,\widetilde\C^r_d} \leq
  \sup\limits_{f\in \widetilde\C^r_d:\, f\vert_P=0} \norm{f}_\infty
  \leq \sup\limits_{f\in \widetilde\C^r_d(Q,d^{(r-1)/2} \delta)} \norm{f}_\infty
  = E\brackets{Q,\widetilde\C^r_d,d^{\frac{r-1}{2}}\delta}.
 $$
 Letting $\delta$ tend to zero yields the statement.
\end{proof}

For these classes, 
it is enough to consider the following
single-step recursion.

\begin{lemma}
 \label{recursion lemma 2}
 Let $m\in \IN$, $d\in\IN$ and $r\in\IN$. Then
 \begin{equation*}
  E\brackets{Q_m^d,\widetilde\C^r_d}
  \leq \frac{\sqrt{d}}{2m}\,
  E\brackets{Q_m^d,\widetilde\C^{r-1}_d}.
 \end{equation*}
\end{lemma}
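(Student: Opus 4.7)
The plan is to follow the template of Lemma~\ref{odd r lemma}, but to exploit the extra symmetry built into the class $\widetilde\C^r_d$, namely that \emph{all} directional derivatives (not only the axis-parallel ones) are controlled. This allows us to replace the $\ell^1$-length $d/(2m)$ of an axis-parallel polygonal chain by the $\ell^2$-distance $\sqrt d/(2m)$ from an arbitrary point to the nearest grid node.

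More precisely, I would first reduce the claim to the $\delta$-version
\begin{equation*}
 E\brackets{Q_m^d,\widetilde\C^r_d,\delta}
 \,\leq\, \delta^{2^{r-1}} + \frac{\sqrt{d}}{2m}\, E\brackets{Q_m^d,\widetilde\C^{r-1}_d,\delta}
\end{equation*}
for arbitrary $\delta>0$, then let $\delta\downarrow 0$. Fix $f\in\widetilde\C^r_d(Q_m^d,\delta)$ and pick $z\in[0,1]^d$ with $\abs{f(z)}=\norm{f}_\infty$. Choose a grid point $y\in Q_m^d$ minimizing the Euclidean distance to $z$. A standard subcube argument shows $\norm{z-y}_2\leq \sqrt{d}/(2m)$. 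If $z=y$ then $\abs{f(z)}\leq \delta^{2^{r-1}}$ by definition, and we are done; otherwise set $\theta=(z-y)/\norm{z-y}_2\in S_{d-1}$ and apply the fundamental theorem of calculus along the segment from $y$ to $z$ to get
\begin{equation*}
 \abs{f(z)} \,\leq\, \abs{f(y)} + \norm{z-y}_2\cdot \norm{\partial_\theta f}_\infty
 \,\leq\, \delta^{2^{r-1}} + \frac{\sqrt{d}}{2m}\cdot \norm{\partial_\theta f}_\infty.
\end{equation*}

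The one step that requires a moment of care is verifying that $\partial_\theta f\in \widetilde\C^{r-1}_d(Q_m^d,\delta)$, so that $\norm{\partial_\theta f}_\infty\leq E(Q_m^d,\widetilde\C^{r-1}_d,\delta)$. For the global bound one notes that for $\ell\leq r-1$ and $\theta_1,\dots,\theta_\ell\in S_{d-1}$ the quantity $\partial_{\theta_1}\cdots\partial_{\theta_\ell}\partial_\theta f$ is an $(\ell+1)$-st order directional derivative of $f$, hence bounded by one. For the refined bound on $Q_m^d$, one checks that for $\ell<r-1$ the same derivative has order $\ell+1<r$, so by the definition of $\widetilde\C^r_d(Q_m^d,\delta)$ it is bounded by $\delta^{2^{r-(\ell+1)-1}}=\delta^{2^{(r-1)-\ell-1}}$, which is exactly the requirement for membership in $\widetilde\C^{r-1}_d(Q_m^d,\delta)$. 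Inserting this into the displayed inequality above and taking the supremum over $f$ gives the claim.

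There is no real obstacle here: the argument is a direct variant of Lemma~\ref{odd r lemma}, and the $\sqrt{d}$ factor instead of $d$ is exactly what one gains by using a single Euclidean step along $\theta$ rather than $d$ coordinate steps. The only place one has to be a little pedantic is the bookkeeping with the exponents $2^{r-\ell-1}$ when passing from $\widetilde\C^r_d(Q_m^d,\delta)$ to $\widetilde\C^{r-1}_d(Q_m^d,\delta)$, and the trivial case $z=y$ must be split off to keep $\theta$ well defined.
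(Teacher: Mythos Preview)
Your proposal is correct and follows essentially the same approach as the paper's own proof: reduce to the $\delta$-version, pick $z$ maximizing $|f|$, connect $z$ to a nearest grid point $y$ by a line segment of length at most $\sqrt{d}/(2m)$, and integrate $\partial_\theta f\in\widetilde\C^{r-1}_d(Q_m^d,\delta)$ along that segment. You are in fact slightly more careful than the paper in handling the degenerate case $z=y$ and in spelling out the exponent bookkeeping for the membership $\partial_\theta f\in\widetilde\C^{r-1}_d(Q_m^d,\delta)$.
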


\begin{proof}
 It suffices to show for any $\delta>0$ that
 \begin{equation*}
  E\brackets{Q_m^d,\widetilde\C^r_d,\delta}
  \leq \delta^{2^{r-1}} + \frac{\sqrt{d}}{2m}\,
  E\brackets{Q_m^d,\widetilde\C^{r-1}_d,\delta} .
 \end{equation*}
 To this end, let $f\in\widetilde\C^r_d\brackets{Q_m^d,\delta}$ and
 let $z\in[0,1]^d$ such that $\abs{f(z)}=\norm{f}_\infty$.
 There is some $y\in Q_m^d$ such that $y$ and $z$
 are connected by a line segment
 of length at most $\sqrt{d}/(2m)$.
 The directional derivative
 $\partial_\theta f$ with $\theta = \frac{z-y}{\norm{z-y}_2}$
 is contained in $\widetilde\C^{r-1}_d\brackets{Q_m^d,\delta}$.
 Integrating along the line yields
 $$
 \abs{f(z)} \leq 
 \abs{f(y)} + \frac{\sqrt{d}}{2m}
 \norm{\partial_\theta f}_\infty
 \leq \delta^{2^{r-1}} + \frac{\sqrt{d}}{2m} E\brackets{Q_m^d,\widetilde\C^{r-1}_d,\delta}.
 $$
 Letting $\delta$ tend to zero yields the statement.
\end{proof}

The upper bound of Theorem~\ref{side theorem}
can now be proven by induction on $r$.

\begin{proof}[Proof of Theorem~\ref{side theorem} (Upper Bound)]
Lemma~\ref{recursion lemma 2} and $E(Q_m^d,\widetilde\C^0_d)=1$ yield
$$
E\brackets{Q_m^d,\widetilde\C^r_d} \leq 
\brackets{\frac{\sqrt{d}}{2m}}^r
$$
for any $m\in\IN$, $d\in\IN$ and $r\in\IN_0$.
Let now $d\in\IN$, $r\in\IN$ and $\varepsilon>0$.
We set
 $$
 n=(d+1)^{r-1} (m+1)^d, \quad\text{where}\quad
 m=\left\lceil\frac{1}{2} \sqrt{d} \varepsilon^{-1/r}\right\rceil.
 $$
 Lemma~\ref{small derivatives 2} yields
 $$
 e^{\rm app}\brackets{n,\widetilde\C^r_d} 
 \leq E\brackets{Q_m^d,\widetilde\C^r_d}
 \leq \varepsilon
 $$
 and hence
 $$
 n^{\rm app}\brackets{\varepsilon,\widetilde\C^r_d} \leq n .
 $$
\end{proof}

\section{Lower bounds}

By Lemma~\ref{Bakhvalov}, we can estimate 
$e(n,\widetilde\C^r_d)$ from below as follows.
For any point set $P$ with cardinality at most $n$,
we construct a function $f\in \widetilde\C^r_d$ that vanishes on $P$
but has a large maximum in $[0,1]^d$,
a so called fooling function.
We will use the following lemma.
Note that
$$
%  \norm{\cdot}_{\widetilde\C^r_d}: \C^r([0,1]^d) \to \IR, \quad
 \norm{f}_{r,d} =
 \sup\limits_{\ell \leq r, \theta_i\in S_{d-1}}
 \Vert \partial_{\theta_1}\cdots\partial_{\theta_\ell} f \Vert_\infty
$$
defines a norm on the space of smooth functions $f:\IR^d\to \IR$ 
with compact support.

\begin{lemma}
\label{radial function}
 There exists a sequence $\brackets{g_d}_{d\in\IN}$ of smooth functions 
 $g_d:\IR^d\to \IR$
 with compact support in the unit ball that satisfy $g_d(0) =1$ and
 \begin{equation*}
  \sup\limits_{d\in\IN}\, \norm{g_d}_{r,d}\, < \infty
  \qquad \text{for all } \quad r\in\IN_0.
 \end{equation*}
\end{lemma}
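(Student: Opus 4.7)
The plan is to take $g_d$ to be a \emph{radial} function. Fix once and for all a smooth $\psi:\IR\to\IR$ with $\psi(0)=1$ and support contained in $[-1,1]$, and set $g_d(x):=\psi\brackets{\norm{x}_2^2}$. Then $g_d(0)=1$ and $g_d$ vanishes outside the closed unit ball, so the normalization and support requirements are immediate. The entire content of the lemma is the uniform-in-$d$ bound on $\norm{g_d}_{r,d}$.

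To obtain such a bound, I would first analyze the function $h_d(x):=\norm{x}_2^2$. Its directional derivatives are
$$
 \partial_\theta h_d(x) = 2\scalar{\theta}{x},
 \qquad
 \partial_{\theta'}\partial_\theta h_d(x) = 2\scalar{\theta}{\theta'},
$$
and all directional derivatives of order at least three vanish identically, since $h_d$ is a quadratic polynomial. The crucial point is that on the support of $g_d$, where $\norm{x}_2\leq 1$, the Cauchy--Schwarz inequality bounds both expressions above by $2$ in absolute value, uniformly in the dimension $d$.

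Next I would apply Faà di Bruno's formula to the composition $g_d=\psi\circ h_d$: for $\ell\leq r$ and unit vectors $\theta_1,\dots,\theta_\ell\in S_{d-1}$,
$$
 \partial_{\theta_1}\cdots\partial_{\theta_\ell} g_d(x)
 = \sum_{\pi} \psi^{(\abs{\pi})}\brackets{h_d(x)}
 \prod_{B\in\pi}\brackets{\prod_{i\in B}\partial_{\theta_i}} h_d(x),
$$
where the sum ranges over partitions $\pi$ of $\set{1,\dots,\ell}$. Only partitions whose blocks all have size one or two contribute, since any block of size three or more produces a factor zero. Each surviving block factor is bounded by $2$ on the unit ball, independent of $d$, and $\abs{\psi^{(k)}(h_d(x))}\leq \norm{\psi^{(k)}}_\infty$. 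Bounding the number of admissible partitions by the Bell number $B_\ell$ and summing yields $\norm{g_d}_{r,d}\leq C_r$ for a constant $C_r$ depending only on $r$ and $\psi$, as required.

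I do not expect a serious obstacle here. The only step that requires some attention is the combinatorial bookkeeping in Faà di Bruno together with the observation that $h_d$ has no nonvanishing directional derivatives of order three or higher. Once those are in place, the dimension-independence of the final bound reduces entirely to Cauchy--Schwarz applied to unit vectors, which is precisely why the squared-norm profile $\psi\brackets{\norm{x}_2^2}$ is preferable to, say, a product of one-dimensional bumps.
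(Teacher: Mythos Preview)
Your construction is exactly the one in the paper: set $g_d(x)=\psi\brackets{\norm{x}_2^2}$ for a fixed smooth bump $\psi$. The justification for the uniform-in-$d$ bound differs, however. The paper argues by symmetry: the directional derivative $\partial_{\theta_1}\cdots\partial_{\theta_\ell} g_d(x)$ depends only on $\norm{x}_2$ and the pairwise angles among $x,\theta_1,\dots,\theta_\ell$, so once $d$ is large enough to realize all such angle configurations (namely $d\geq \ell+1$), the quantity $\norm{g_d}_{r,d}$ is literally constant in $d$. You instead compute explicitly via Fa\`a di Bruno, using that $h_d(x)=\norm{x}_2^2$ has directional derivatives bounded by $2$ on the unit ball (Cauchy--Schwarz) and vanishing from order three on. Your route is more elementary and yields a concrete bound $C_r$ in terms of Bell numbers and $\max_{k\leq r}\norm{\psi^{(k)}}_\infty$; the paper's invariance argument is shorter but leaves the finiteness of $\sup_d\norm{g_d}_{r,d}$ somewhat implicit for the finitely many small dimensions. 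Both are correct.
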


\begin{proof}
 Take any function $h\in\C^\infty(\IR)$ which equals one on $(-\infty,0]$ and zero on $[1,\infty)$.
 Then the radial functions
 \begin{equation*}
  g_d: \IR^d \to \IR, \quad g_d(x)=h\brackets{\norm{x}_2^2}
 \end{equation*}
 for $d\in\IN$ have the desired properties.
 This follows from the fact that the directional derivative
 $\partial_{\theta_1}\cdots\partial_{\theta_r} g_d(x)$
 only depends on the length of $x$ and the angles
 between each pair of vectors $\theta_1,\hdots,\theta_r\in S_{d-1}$ and $x\in\IR^d$. 
 As soon as $d$ is large enough 
 to enable all constellations of lengths and angles,
 the norm $\norm{g_d}_{r,d}$ is independent of $d$.
\end{proof}

To obtain a suitable fooling function for a given point set $P$,
it is enough to shrink and shift the support of $g_d$
to the largest euclidean ball that does not intersect with $P$.
The radius of this ball can be estimated by a simple volume argument.

\begin{lemma}
\label{radius lower bound}
 Let $d\in\IN$ and $P\subset[0,1]^d$ with cardinality $n\in\IN$.
 Then there exists a point $z\in[0,1]^d$ with
 \begin{equation*}
  \dist_2\brackets{z,P} \geq \frac{\sqrt{d}}{5 n^{1/d}}.
 \end{equation*}
\end{lemma}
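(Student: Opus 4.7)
The proof will be a standard Euclidean volume / covering argument. First, by compactness of $[0,1]^d$ and continuity of $z \mapsto \dist_2(z,P)$, the supremum $R^\star = \sup_{z\in[0,1]^d}\dist_2(z,P)$ is attained at some point $z^\star\in[0,1]^d$. Equivalently, every point of the cube lies within Euclidean distance $R^\star$ of some $p\in P$, so the $n$ closed balls $\overline B(p,R^\star)$ with $p\in P$ cover $[0,1]^d$.

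Taking $d$-dimensional Lebesgue volumes and writing $V_d$ for the volume of the unit Euclidean ball in $\IR^d$, I get
\[
 1 = \vol_d\brackets{[0,1]^d} \leq \sum_{p\in P} \vol_d\brackets{\overline B(p,R^\star)} = n\cdot V_d\cdot (R^\star)^d,
\]
which rearranges to $R^\star \geq n^{-1/d}\, V_d^{-1/d}$. So the problem reduces to the dimension-free inequality $V_d^{-1/d} \geq \sqrt{d}/5$, which I would settle by an explicit Stirling estimate.

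For that final step I would start from the closed form $V_d = \pi^{d/2}/\Gamma(d/2+1)$ and use the Stirling-type lower bound $\Gamma(d/2+1)\geq \sqrt{\pi d}\brackets{d/(2e)}^{d/2}$, so that $V_d \leq (2\pi e/d)^{d/2}/\sqrt{\pi d}$. Taking $d$-th roots yields
\[
 \sqrt{d}\,V_d^{1/d} \,\leq\, \sqrt{2\pi e}\cdot (\pi d)^{-1/(2d)} \,\leq\, \sqrt{2\pi e},
\]
since $\pi d\geq 1$ for every $d\in\IN$. Because $\sqrt{2\pi e}\approx 4.13<5$, this gives $V_d^{1/d}\leq 5/\sqrt{d}$, i.e.\ $V_d^{-1/d}\geq \sqrt{d}/5$, and combining with the covering inequality proves the lemma.

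The argument is elementary and I do not foresee any real obstacle; the only step that needs any care is the numerical verification that Stirling's bound is tight enough to make $\sqrt{2\pi e}$ fit under $5$. The constant $5$ is not optimal, but it is what the statement asks for and the reserve of $\approx 0.87$ is comfortable.
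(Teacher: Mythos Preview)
Your proof is correct and follows essentially the same volume/covering argument as the paper: both compare the volume of the cube with that of $n$ Euclidean balls and then use the formula $V_d=\pi^{d/2}/\Gamma(d/2+1)$ together with a Stirling lower bound on $\Gamma(d/2+1)$ to obtain a constant below $5$. The only cosmetic difference is that you use the slightly sharper bound $\Gamma(d/2+1)\geq \sqrt{\pi d}\,(d/(2e))^{d/2}$, arriving at $\sqrt{2\pi e}\approx 4.13$, whereas the paper uses $\Gamma(d/2+1)\geq \tfrac{\sqrt{2\pi}}{e}(d/(2e))^{d/2}$ and arrives at $e^{3/2}\approx 4.48$.
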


\begin{proof}
 The set
 \begin{equation*}
  B_R(P) = \bigcup_{p \in P} B_R(p)
 \end{equation*}
 of points within a distance $R>0$ of $P$ has the volume
 \begin{equation*}
  \vol_d\brackets{B_R(P)}
  \leq n R^d\, \vol_d\brackets{B_1(0)}
  = \frac{n R^d\, \pi^{d/2}}{\Gamma\brackets{\frac{d}{2}+1}}.
 \end{equation*}
 By Stirling's Formula, this can be estimated from above by
  \begin{equation*}
  \vol_d\brackets{B_R(P)}
  \leq \frac{n R^d\, \pi^{d/2}}{\frac{\sqrt{2\pi}}{e} \brackets{\frac{d}{2e}}^{d/2}}
  \leq \brackets{ \frac{n^{1/d} e^{3/2}}{\sqrt{d}}\, R }^d
 .\end{equation*}
 If $R=\frac{\sqrt{d}}{5 n^{1/d}}$, the volume is less than one
 and $[0,1]^d\setminus B_R(P)$ must be nonempty.
\end{proof}

We are ready to prove the lower bound of Theorem~\ref{side theorem}.

\begin{proof}[Proof of Theorem~\ref{side theorem} (Lower Bound)]
Let $r\in\IN$, $d\in\IN$ and $n\in\IN$.
Let $P$ be any subset of $[0,1]^d$ with cardinality at most $n$.
Let $g_d$ be like in Lemma~\ref{radial function} and set
 \begin{equation*}
  K_r=\sup\limits_{d\in\IN}\, \norm{g_d}_{r,d}
  \qquad\text{and}\qquad
  R=\min\set{1,\frac{\sqrt{d}}{5 n^{1/d}}}.
 \end{equation*}
By Lemma~\ref{radius lower bound} there is a point $z\in[0,1]^d$
such that $B_R(z)$ does not contain any element of $P$.
Hence, the function
 \begin{equation*}
  f_*: [0,1]^d \to \IR, \quad f_*(x) = \frac{R^r}{K_r}\, g_d\brackets{\frac{x-z}{R}}
 \end{equation*}
is an element of $\widetilde\C^r_d$ and vanishes on $P$. 
We obtain
\begin{equation*}
 \sup\limits_{f\in \widetilde\C^r_d:\, f\vert_P=0} \norm{f}_\infty
 \geq \norm{f_*}_\infty
 \geq f_*(z)
 = \frac{R^r}{K_r}
 = \min\set{\frac{1}{K_r},\frac{d^{r/2}}{5^r K_r n^{r/d}}}
.\end{equation*}
Since this is true for any such $P$, Lemma~\ref{Bakhvalov} yields 
\begin{equation}
\label{error numbers lower bound}
 e^{\rm app}\brackets{n,\widetilde\C^r_d} \geq
 \min\set{\frac{1}{K_r},\frac{d^{r/2}}{5^r K_r n^{r/d}}}.
\end{equation}
We set $\varepsilon_r=1/K_r$.
Given $\varepsilon\in(0,\varepsilon_r)$, 
the right hand side in \eqref{error numbers lower bound}
is larger than $\varepsilon$
for any $n$ smaller than $d^{d/2}/(5^d K_r^{d/r}\varepsilon^{d/r})$.
This yields
\begin{equation*}
 n^{\rm app}\brackets{\varepsilon,\widetilde\C^r_d}
 \geq \brackets{(5^rK_r)^{-1/r} \sqrt{d}\, \varepsilon^{-1/r}}^d
\end{equation*}
as it was to be proven.
\end{proof}

In the same way, we obtain lower bounds
for the case that the domains $[0,1]^d$ are replaced
by other domains $D_d\subset\IR^d$ that satisfy
$\vol_d(D_d)\geq a^d$ for some $a>0$ and all $d\in\IN$.
We simply have to multiply the radii in the previous proofs by $a$.

We now turn to the lower bounds of Theorem~\ref{main theorem}
and \ref{main theorem odd}.

\begin{proof}[Proof of Theorem~\ref{main theorem} and \ref{main theorem odd} (Lower Bounds)]
Note that $\C^r_d$ contains $\widetilde\C^r_d$ and hence
\begin{equation*}
 n^{\rm app}\brackets{\varepsilon,\C^r_d}
 \geq
 n^{\rm app}\brackets{\varepsilon,\widetilde\C^r_d}.
\end{equation*}
Furthermore, any $\varepsilon$-approximation
of a function on $[0,1]^d$ immediately yields an
$\varepsilon$-approximation of its integral and hence
\begin{equation*}
 n^{\rm app}\brackets{\varepsilon,\C^r_d}
 \geq
 n^{\rm int}\brackets{\varepsilon,\C^r_d}.
\end{equation*}
With these relations at hand,
the desired lower bounds for $r\geq 2$ immediately
follow from Theorem~\ref{side theorem}.
The lower bound for $r=1$ follows from
the complexity of numerical integration as
studied by Hinrichs, Novak, Ullrich and Woźniakowski~\cite{hnuw}.
% This concludes the proof of Theorem~\ref{main theorem},
% \ref{main theorem odd} and \ref{side theorem}.
\end{proof}

\begin{ack}
 I wish to thank Erich Novak for many fruitful discussions
 in the context of this paper.
\end{ack}

\raggedright{

}


\begin{thebibliography}{XX}

\bibitem{b} N.\,S.\,Bakhvalov:
\textit{On the optimality of linear methods for operator 
approximation in convex classes of functions}.
USSR~Comput.\,Maths.\,Math.\,Phys.\,\textbf{11}, 244--249, 1971.

\bibitem{cw} J.\,Creutzig, P.\,Wojtaszczyk: 
\textit{Linear vs. nonlinear algorithms for linear problems}.
J.\,Complex.\,\textbf{20}, 807--820, 2004.

\bibitem{hnuw} A.\,Hinrichs, E.\,Novak, M.\,Ullrich, H.\,Woźniakowski: 
\textit{Product rules are optimal for numerical integration in classical smoothness spaces}.
J.\,Complex.\,\textbf{38}, 39--49, 2017.

\bibitem{hpu} A.\,Hinrichs, J.\,Prochno, M.\,Ullrich:
\textit{The curse of dimensionality for numerical integration on general domains}.
ArXiv e-prints, 2018. arXiv:1804.03957 [math.NA]

\bibitem{n_opt} E.\,Novak: 
\textit{Deterministic and Stochastic Error Bounds in Numerical Analysis}.
LNiM~\textbf{1349}, Springer-Verlag, Berlin, 1988.

\bibitem{trac1} E.\,Novak, H.\,Woźniakowski: 
\textit{Tractability of multivariate problems. 
Volume I: Linear information.} 
EMS, Zürich, 2008.

\bibitem{nw_inf} E.\,Novak, H.\,Woźniakowski: 
\textit{Approximation of infinitely differentiable multivariate functions is intractable}.
J.\,Complex.\,\textbf{25}, 398--404, 2009.

\bibitem{s} S.\,A.\,Smolyak:
\textit{On optimal restoration of functions and functionals of them}.
Candidate Dissertation (Russian), Moscow State University, 1965.

\bibitem{suk} A.\,G.\,Sukharev:
\textit{Optimal method of constructing best uniform approximations
for functions of a certain class}.
USSR Comput.\,Math.\,Math.\,Phys.\,\textbf{18}(2), 21--31, 1978.

\bibitem{vib} J.\,Vybíral:
\textit{Weak and quasi-polynomial tractability of approximation of infinitely differentiable functions}.
J.\,Complex.\,\textbf{30}(2), 48--55, 2014.

\bibitem{w} G.\,W.\,Wasilkowski: 
\textit{Some nonlinear problems are as easy as the approximation problem}.
Comput.\,Math.\,Appl.\,\textbf{10}, 351--363, 1984.

\bibitem{abc} H.\,Woźniakowski: 
\textit{ABC on IBC}.
J.\,Complex.\,(in press).
https://doi.org/10.1016/j.jco.2018.05.001

 
%  \bibitem{babenko} K.\,I.\,Babenko: \textit{About the approximation of periodic functions of many variable trigonometric polynomials}.
%  Dokl.\,Akad.\,Nauk SSR~\textbf{32}, 247--250, 1960.
%  
%  \bibitem{cm} A.\,Cohen, G.\,Migliorati: \textit{Optimal weighted least-squares methods}.
%  ArXiv e-prints, 2016. arXiv:1608.00512 [math.NA]
%  
%  \bibitem{hnv} A.\,Hinrichs, E.\,Novak, J.\,Vybíral: \textit{Linear information versus function evaluations for $L_2$-approximation}.
%  J.\,Approx.\,Theory~\textbf{153}, 97--107, 2008.
% 
%  \bibitem{j67} J.\,W.\,Jerome: \textit{On the $L_2$ n-Width of Certain Classes of Functions of Several Variables}.
%  Journal of Mathematical Analysis and Applications~\textbf{20}, 110--123, 1967.
%  
%  \bibitem{ich16} D.\,Krieg: \textit{Tensor power sequences and the approximation of tensor product operators}.
%  ArXiv e-prints, 2016. arXiv:1612.07680 [math.NA]
%  
%  \bibitem{kn} D.\,Krieg, E.\,Novak: \emph{A Universal Algorithm
%  for Multivariate Integration}.
%  Found.\,Comput.\,Math., 2016. doi:10.1007/s10208-016-9307-y
%  
%  \bibitem{kmu} T.\,Kühn, S.\,Mayer, T.\,Ullrich: \textit{Counting via Entropy: New Preasymptotics for the Approximation
%  Numbers of Sobolev Embeddings}.
%  SIAM Journ.\,on Numerical Analysis~\textbf{54}(6), 3625--3647, 2016.
%  
%  \bibitem{ksu} T.\,Kühn, W.\,Sickel, T.\,Ullrich: \textit{Approximation of mixed order Sobolev functions on the $d$-torus -- asymptotics, preasymptotics and $d$-dependence}.
% Constructive Approximation~\textbf{42}, 353--398, 2015.
%  
%  \bibitem{mityagin} B.S.\,Mityagin: \textit{Approximation of functions in $L^p$ and $C$ on the torus}.
%  Math.\,Notes~\textbf{58}, 397--414, 1962.
%  
%  \bibitem{nikolskaya} N.S.\,Nikol’skaya: \textit{Approximation of differentiable functions of several variables by Fourier sums in the $L_p$-metric}.
%  Sibirsk.\,Mat.\,Zh.~\textbf{15}, 395--412, 1974; English transl. in Siberian Math.\,J.~\textbf{15}, 1974.
%  
%  \bibitem{track1} E.\,Novak, H.\,Woźniakowski: \textit{Tractability of Multivariate Problems. Volume I: Linear Information}.
%  EMS, Zürich, 2008.
%  
%  \bibitem{track3} E.\,Novak, H.\,Woźniakowski: \textit{Tractability of Multivariate Problems.
%  Volume III: Standard Information for Operators}.
%  EMS, Zürich, 2012.
%  
%  \bibitem{ullrich} M.\,Ullrich: \emph{A Monte Carlo method for integration of
%  multivariate smooth functions I: Sobolev spaces}.
%  ArXiv e-prints, 2016. arXiv:1604.06008 [math.NA]
%  
%  \bibitem{ww} G.\,W.\,Wasilkowski, H.\,Woźniakowski: \textit{The power of standard information for multivariate approximation
%  in the randomized setting}.
%  Math.\,Comp.~\textbf{76}, 965--988, 2006.

\end{thebibliography}
\end{document}